\documentclass[11pt]{amsart}
\usepackage{amssymb}
\newcommand{\R}{{\mathbb R}}
\newcommand{\I}{{\mathbf I}}

\newcommand{\al}{\alpha}
\newcommand{\ap}{\approx}
\newcommand{\dy}{\mathcal{D}}

\newcommand{\w}{\mathbf{W}_{1,p}}
\newcommand{\wa}{\mathbf{W}_{\alpha,p}}

\newcommand{\rat}{\mathbf{I}_{2\alpha}}
\newcommand{\ra}{\mathbf{I}_{\alpha}}

\newcommand{\rtw}{\mathbf{I}_2}

\newcommand{\M}{M^{+}(\R^n)}
\begin{document}
\title[Pointwise estimates of solutions] {Pointwise estimates of Brezis--Kamin type \\ for solutions of sublinear elliptic equations }
\author{Dat T. Cao}
\address{Department of Mathematics, University of Tennessee, Knoxville, 
TN 37996, USA}
\email{dcao4@utk.edu}
\author{ Igor E. Verbitsky}
\address{Department of Mathematics, University of Missouri, Columbia, MO  65211, USA}
\email{verbitskyi@missouri.edu}
\subjclass[2010]{35B05, 35J92}
\keywords{Quasilinear equations, fractional Laplacian, pointwise estimates, Wolff potentials}
\begin{abstract}
We study quasilinear elliptic equations of the type 
$$
-\Delta_pu=\sigma \, u^q \quad \text{in} \, \, \,  {\R}^n,
$$
where  $\Delta_p u=\nabla \cdot(\nabla u |\nabla u|^{p-2})$ is the $p$-Laplacian (or a more general $\mathcal{A}$-Laplace operator $\text{div} \,  \mathcal{A}(x, \nabla u)$), $0<q < p-1$, and $\sigma \ge 0$ is an arbitrary locally integrable function or measure on $\R^n$.  

We  obtain  necessary and sufficient conditions for the existence of  positive solutions (not necessarily bounded) which satisfy global pointwise estimates 
of Brezis--Kamin type given in terms of Wolff potentials. 
Similar problems with the fractional Laplacian $(-\Delta )^{\alpha}$  for $0<\alpha<\frac{n}{2}$ are treated as well,  including explicit estimates for  radially symmetric 
$\sigma$. Our  results are new even in the classical case $p=2$ and $\alpha=1$. 
\end{abstract}
\maketitle
\numberwithin{equation}{section}
\newtheorem{theorem}{Theorem}[section]
\newtheorem{lemma}[theorem]{Lemma}
\newtheorem{rmk}[theorem]{Remark}
\newtheorem{cor}[theorem]{Corollary}
\newtheorem{prop}[theorem]{Proposition}
\newtheorem{defn}[theorem]{Definition}
\allowdisplaybreaks
\section{Introduction} \label{intro}
We study quasilinear equations of the type 
\begin{equation}\label{eq1}
\left\{ \begin{array}{ll}
-\Delta_pu=\sigma \, u^q & \text{in} \, \, \,  {\R}^n,\\&\\
\displaystyle{\liminf_{x \to \infty} \, u(x)}=r,  & \, \, u>0, 
\end{array} \right.
\end{equation}
where $\Delta_p u=\nabla \cdot(\nabla u |\nabla u|^{p-2})$ is the $p$-Laplacian, $0<q<p-1$, $r \ge 0$, and $\sigma \in \M$; here $\M$  denotes the class of all nonnegative locally finite Borel measures on $\R^n$.  Our 
main results are new, in particular,  for  nonnegative $\sigma \in L^1_{\rm loc}(\R^n)$.   

We prove the existence of distributional solutions to \eqref{eq1}  and obtain sharp global pointwise bounds of solutions of Brezis--Kamin type (see \cite{BK} for bounded solutions in the case $p=2$) in terms of Wolff potentials. Necessary and sufficient conditions on $\sigma$  for the existence of solutions to \eqref{eq1} which satisfy such estimates will be presented.  

We also obtain similar results for the fractional Laplace equation, 
\begin{equation}\label{frac}
\left\{ \begin{array}{ll}
(-\Delta)^{\al}u=\sigma \, u^q & \text{in} \, \, \,  {\R}^n,\\&\\
\displaystyle{\liminf_{x \to \infty} \, u(x)}=r,  & \, \, u>0, 
\end{array} \right.
\end{equation}
where $0<q<1$ and  $0<\al<\frac{n}{2}$. In particular, for radially symmetric $\sigma$, explicit conditions which ensure  the existence of radial solutions together with sharp bilateral bounds of solutions are given.

In the classical case $p=2$, the sublinear problem \eqref{eq1}, or equivalently \eqref{frac} with $\al=1$,  was studied in \cite{BK}, where a necessary and sufficient condition for the existence of  {\it bounded}  solutions was given, together with uniqueness. Of specific 
interest to us are the following global pointwise estimates of bounded solutions obtained in \cite{BK}: 
\begin{equation} \label{BK-est} 
 c^{-1} \, ( \mathbf{I}_2 \sigma)^{\frac{1}{1-q}}\le u \le c\, \mathbf{I}_2  \sigma,
\end{equation}
where $c$ is a positive constant, and $\mathbf{I}_2 \sigma  = (-\Delta)^{-1} \sigma $ is the Newtonian potential of $\sigma$. 

The fractional Laplace equation \eqref{frac} 
 was studied recently by Punzo and  Terrone  \cite{PT}. They considered bounded solutions and obtained similar 
pointwise estimates  in terms of Riesz potentials $\rat \sigma  = (-\Delta)^{-\alpha} \sigma $ under the restriction $0<\al<\min\{1,\frac{n}{4}\}$. 

Recently, we characterized the existence of finite energy solutions \cite{CV1} and arbitrary distributional solutions \cite{CV2}, and gave matching upper and lower pointwise estimates of solutions using new intrinsic potentials of Wolff 
type, both for equation \eqref{eq1} with $0<q<p-1$, and \eqref{frac} 
with $0<q<1$ and $0<\al<\frac{n}{2}$. However, the definition and 
applications of intrinsic Wolff potentials are complicated. 

The main goal of this paper is to prove the existence of a broad class of solutions, not necessarily bounded, and obtain Brezis--Kamin type estimates similar to \eqref{BK-est} in terms of the usual Wolff potentials defined in \cite{HW} (see also \cite{AH}, \cite{Maz11}), or classical Riesz potentials in the case $p=2$ or $\alpha=1$,  both for  equation \eqref{eq1} with $0<q<p-1$, and \eqref{frac} with $0<q<1$ and $0<\al<\frac{n}{2}$.

We remark that equation \eqref{frac} is equivalent to the following integral equation 
involving Riesz potentials, 
\begin{equation} \label{sub-integral}
u=\rat(u^qd\sigma) + r, \quad u >0. 
\end{equation}

It was shown in \cite{CV2} that if there exists a nontrivial solution $u$ to the integral inequality
\begin{equation} \label{intgral-ine}
u \ge \rat(u^qd\sigma) \quad \text{in} \, \, \R^n, \quad u \in L^q_{\rm loc}(\R^n, d \sigma), 
\end{equation}
then $u$ satisfies the lower bound 
\begin{equation} \label{lowest}
u(x) \ge c \big(\rat\sigma(x)\big)^{\frac{1}{1-q}}, \quad  x \in \R^n,
\end{equation}
where  $c=c(n,q)>0$. Therefore, a necessary condition for the existence of a solution to \eqref{sub-integral} is obviously that $\rat\sigma \not \equiv \infty$, or equivalenty, 
\begin{equation}\label{Rieszfinite}
\int_1^{\infty} \frac{\sigma(B(0, t))}{t^{n- 2\al}}\frac{dt}{t} < +\infty.
\end{equation}
Here and below  $B(x,t)$ denotes a ball of radius $t$ centered at $x \in \R^n$. 

We will see that in fact $c_0 \left(\rat\sigma\right)^{\frac{1}{1-q}}$ is a subsolution to \eqref{sub-integral} provided the constant  $c_0>0$ is small enough, and  
\eqref{Rieszfinite} holds. In other words, in order to establish the existence of a solution to \eqref{sub-integral}, it  suffices to find a supersolution, 
which  is the main goal of this paper.

It was also noticed in \cite{CV2} that if there exists a solution to \eqref{intgral-ine}, then  $\sigma $ must be absolutely continuous with respect to $\text{cap}_{\al,2}$, which is the $(\al,2)$-capacity defined by \eqref{Rieszcapa}. In some of our results we will assume that $\sigma$ satisfies the following stronger condition 
\begin{equation}\label{cap2-cond}
\sigma(E) \le c(\sigma) \, \text{cap}_{\al, 2}(E), \, \,  \text{ for all compact sets }  \, E \subset {\R}^n,
\end{equation}
where $c(\sigma)>0$ is a constant which does not depend on $E$. Capacity conditions of this type  were introduced and applied to various problems 
by V. Maz'ya (see \cite{AH}, \cite{Maz11}, \cite{V1}). We will show below 
(see Lemma \ref{wolff-capacity}) that this condition is equivalent, 
for every $s>0$, to
\begin{equation} \label{riesz-integral}
\int_E (\rat\sigma_E)^s d\sigma \le c \, \sigma(E), 
\end{equation}
 where $\sigma_E$ denotes the restriction of  $\sigma$ 
 to a compact set $E$. This implies, in particular, that for all balls $B$, 
 or cubes in place of balls, 
\begin{equation} \label{riesz-ball}
\int_B (\rat\sigma_B)^s d\sigma \le c \, \sigma(B).  
\end{equation}

Estimates of this type for $s \ge 1$ were considered in \cite{Maz11}, \cite{V1}, \cite{JV1}, \cite{JV2}, and for $s>\frac{1}{2}$ in \cite{NTV03}.  
Such estimates for all $s>0$ are new, and allow us to find a supersolution and obtain upper estimates of solutions to \eqref{sub-integral}. 

Condition \eqref{cap2-cond}, or equivalently \eqref{riesz-ball} 
on cubes  for some $s>0$, 
 together with  \eqref{Rieszfinite}, ensures the existence of a solution $u$ to \eqref{sub-integral} which satisfies the following two-sided estimates if $r>0$: 
\begin{equation} \label{twosdinhomo} c^{-1} (r + \rat\sigma)^{\frac{1}{1-q}}\le u \le c (r + \rat\sigma)^{\frac{1}{1-q}},
\end{equation}
where $c=c(n,q,r,c(\sigma))>0$. It also yields the existence of a solution $u$ to the ground state problem \eqref{sub-integral} with $r=0$ such that \begin{equation} \label{twosdhomo} c^{-1} \, ( \rat\sigma)^{\frac{1}{1-q}}\le u \le c\, \left(\rat\sigma + \left(\rtw\sigma\right)^{\frac{1}{1-q}} \right).
\end{equation}

These estimates are sharp as indicated in \cite{BK}. Indeed, suppose that $\mathbf{I}_{2 \alpha}\sigma \in L^{\infty}(\R^n)$ as in \cite{BK}. Then by a well known result (see \cite{AH}, \cite{Maz11}) this implies that \eqref{cap2-cond}  holds. Therefore, there exists a solution $u$ to \eqref{sub-integral} with $r=0$ satisfying \eqref{twosdhomo}. Since $\mathbf{I}_{2 \alpha} \sigma \in L^{\infty}(\R^n)$, this yields 
$$ c^{-1} \, ( \mathbf{I}_{2 \alpha} \sigma)^{\frac{1}{1-q}}\le u\le c\, \mathbf{I}_{2 \alpha} \sigma,$$ 
which coincides with the Brezis--Kamin estimate \eqref{BK-est} in the case $\alpha=1$. However,  condition \eqref{cap2-cond} with 
$\alpha=1$ is in general weaker than the condition $\mathbf{I}_{2} \sigma \in L^{\infty}(\R^n)$ imposed in \cite{BK}, and is applicable to singular (unbounded) solutions.

Obviously, conditions \eqref{cap2-cond} and \eqref{riesz-ball} do not depend on the ``sub-critical'' growth rate $q$ at all. As we will demonstrate below, condition \eqref{riesz-ball} with $s=\frac{q}{1-q}$ can be relaxed to the following condition which does depend on $q$: 
\begin{equation} \label{weaker0}
\int_{B(x, \frac{r}{2})} \left( \rat\sigma\right )^{\frac{q}{1-q}}d\sigma \le c\, \sigma(B(x,r)) \left( 1 +\int_r^{\infty}  \frac{\sigma(B(x,t))}{t^{n-2\al}} \frac{dt}{t}\right)^{\frac{q}{1-q}}. 
\end{equation}

It is easy to see that  \eqref{weaker0} yields the following pointwise condition 
on $\sigma$:  
\begin{equation} \label{newcond0}
 \rat\left(\left( \rat\sigma\right )^{\frac{q}{1-q}}d\sigma\right) \le \kappa \left( \rat\sigma  + \left( \rat\sigma\right )^{\frac{1}{1-q}}\right) ,
\end{equation}
where $\kappa=\kappa(\al,n,q, c)>0$. 

As we will demonstrate below, condition \eqref{newcond0} combined with \eqref{Rieszfinite}, is actually \textit{necessary and sufficient} for the existence of a solution $u$ to  \eqref{sub-integral} with $r=0$ satisfying the Brezis--Kamin type 
estimates \eqref{twosdhomo}.

Both conditions \eqref{weaker0} and \eqref{newcond0} are  weaker than the capacity condition \eqref{cap2-cond}. It is easy to find $\sigma > 0$ such that  \eqref{weaker0} and hence \eqref{newcond0} hold, but \eqref{cap2-cond} fails. For instance,  one can let $\sigma(x)= \frac{1}{|x|^{s}}$ on the ball $B(0,1)$ and  zero outside $B(0,1)$, where $2\alpha<s<n-(n- 2 \alpha)q$. 

In fact, when $\sigma$ is radially symmetric, we will characterize condition \eqref{newcond0} as follows (see Proposition~\ref{radpro} in 
Sec.~\ref{radial} below): 
\begin{equation}\label{char}
\limsup_{|x| \to 0} \frac{\frac{1}{|x|^{(n-2\al)(1-q)}}\int_{|y|<|x|} \frac{d\sigma(y)}{|y|^{(n-2\al)q}} }{ \int_{|y| \ge |x|} \frac{d\sigma(y)}{|y|^{n-2\al} }} < \infty. 
\end{equation}

Also, for  radially symmetric $\sigma$, we obtain necessary and sufficient conditions for the existence of a solution $u$ to \eqref{sub-integral} with $ r=0$ in the following form:
\begin{equation} \label{radialcond}
\int_{|y|<1} \frac{d\sigma(y)}{|y|^{(n-2\al)q}} < \infty \text { and } \int_{|y|\ge 1} \frac{d\sigma(y)}{|y|^{n-2\al}}  < \infty.
\end{equation}
Moreover, such a solution $u$ satisfies matching lower and upper
estimates (see Theorem~\ref{radpro} below): 
\begin{equation}\label{radlow}
u(x) \approx \, \frac{1}{
|x|^{n-2\al}}\left(\int_{|y|<|x|} \frac{d\sigma(y)}{|y|^{(n-2\al)q}}\right)^{\frac{1}{1-q}} + \left(\int_{|y|\ge|x|} \frac{d\sigma(y)}{|y|^{n-2\al}}\right)^{\frac{1}{1-q}}.
\end{equation}
Here $A\approx B$ means $c_1 A \le B \le c_2 A$, where $c_1$ and $c_2$ are  positive constants. \smallskip

Before stating our main results for the quasilinear problem \eqref{eq1}, we need to discuss assumptions analogous to \eqref{Rieszfinite} and \eqref{cap2-cond} for the $p$-Laplacian. The Wolff potential $\wa\sigma$  is defined (\cite{HW}), for $1<p<\infty$ and $ 0<\al < \frac{n}{p}$, by 
$$\wa\sigma(x)=\int_0^{\infty} \left(\frac{\sigma(B(x,t))}{t^{n-\al p}}\right)^{\frac{1}{p-1}}\frac{dt}{t}. $$
 Let $\alpha=1$, and suppose that $\w\sigma \not \equiv \infty$, or equivalently,
\begin{equation} \label{Wolfinite}
\int_1^{\infty} \left(\frac{\sigma(B(0,t))}{t^{n- p}}\right)^{\frac{1}{p-1}}\frac{dt}{t} < \infty.
\end{equation} 
Suppose additionally that $\sigma$ satisfies the capacity condition
\begin{equation}\label{capacitycond}
\sigma(E) \le C(\sigma) \text{ cap}_p(E) \text{ for all compact sets }  E \subset {\R}^n,
\end{equation}
where $C(\sigma)>0$ and cap$_p(\cdot)$ stands for the $p$-capacity defined by
\begin{equation} \label{pcapacity}
\text{cap}_p(E)=\inf \{||\nabla f||^{p}_{L^p} : f \ge 1 \text{ on $E$, } f \in C_0^{\infty}(\R^n)\}, \quad E \subset \R^n,
\end{equation}
for compact sets $E$. 
 
We are now ready to state our first main result for equation \eqref{eq1} with $r>0$. 
\begin{theorem}\label{continhomothm}
Let $1<p<n, 0<q<p-1$, and $r >0$. Let $\sigma \in M^{+}(\R^n)$. If both \eqref{Wolfinite} and \eqref{capacitycond} hold, then there exists a distributional solution $u \in W_{\rm loc}^{1, p}(\R^n)$ to \eqref{eq1}  such that 
$$c^{-1} \Bigl( r + \w\sigma \Bigr)^{\frac{p-1}{p-1-q}} \le u \le c \Bigl( r + \w\sigma \Bigr)^{\frac{p-1}{p-1-q}} ,$$
where  $W_{\rm loc}^{1,p}(\R^n)$ is the usual local Sobolev space and $c>0$ depends only on $n,p,q,r$, and $C(\sigma)$. If $p \ge n$, 
then there are no nontrivial solutions on $\R^n$. 
\end{theorem}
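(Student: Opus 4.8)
We sketch the strategy for $1<p<n$; nonexistence for $p\ge n$ is handled at the end. Put $\gamma:=\frac{p-1}{p-1-q}$, so that $\gamma>1$ and $\gamma$ is the unique fixed point of $t\mapsto 1+\frac{qt}{p-1}$, and write $W:=r+\w\sigma$. Recall that for $\mu\in\M$ with $\w\mu\not\equiv\infty$ the equation $-\Delta_p w=\mu$ has a nonnegative $p$-superharmonic distributional solution $W_0(\mu)$ with $\liminf_{x\to\infty}W_0(\mu)=0$, obeying the pointwise Wolff-potential bounds $K_1\w\mu\le W_0(\mu)\le K_2\w\mu$ (see \cite{HW}); since constants are $p$-harmonic, $\mathcal T\mu:=r+W_0(\mu)$ solves $-\Delta_p(\mathcal T\mu)=\mu$ with $\liminf_{x\to\infty}\mathcal T\mu=r$, and $r+K_1\w\mu\le\mathcal T\mu\le r+K_2\w\mu$. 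The plan is to produce a solution of the integral equation $u=\mathcal T(u^q\,d\sigma)$ — equivalently of \eqref{eq1} — by the monotone iteration $u_0:=r$, $u_{k+1}:=\mathcal T(u_k^q\,d\sigma)$. The map $v\mapsto\mathcal T(v^q\,d\sigma)$ is monotone and $u_0\le u_1$, so $\{u_k\}$ is nondecreasing; once a pointwise supersolution majorant is at hand, $u:=\lim_k u_k$ solves $u=\mathcal T(u^q\,d\sigma)$ by monotone convergence in the Wolff potentials and stability of $p$-superharmonic functions under increasing limits, it has $\liminf_{x\to\infty}u=r$ because $u=r+W_0(u^q\,d\sigma)$ with $\liminf_{|x|\to\infty}W_0(u^q\,d\sigma)=0$, and $u\in W^{1,p}_{\mathrm{loc}}(\R^n)$ since \eqref{capacitycond} places the datum $\sigma u^q$ locally in the dual Sobolev space.

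The crucial step is the pointwise bound for Wolff potentials: under \eqref{Wolfinite} and \eqref{capacitycond},
\begin{equation}\label{keyest}
\w\bigl(W^{\gamma q}\,d\sigma\bigr)(x)\ \le\ A\,W(x)^{\gamma},\qquad x\in\R^n,
\end{equation}
with $A=A(n,p,q,r,C(\sigma))>0$. To prove \eqref{keyest} one fixes $x$, splits the integral defining $\w(W^{\gamma q}d\sigma)(x)$ at each scale $t$ into the local contribution of $\sigma_{B(x,2t)}$ and a far part controlled by a multiple of $\w\sigma(x)$, estimates $W$ on $B(x,t)$ from above by $\w\sigma_{B(x,2t)}$ plus that far part, and invokes the bound $\int_B(\w\sigma_B)^s\,d\sigma\le c\,\sigma(B)$ valid for \emph{every} $s>0$ and every ball $B$ — the Wolff analogue of Lemma \ref{wolff-capacity}, derived from \eqref{capacitycond}. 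Summing the dyadic pieces and using $\gamma=1+\frac{q\gamma}{p-1}$ to balance exponents yields \eqref{keyest}. Granting it, set $\bar u:=\lambda W^{\gamma}$; since the Wolff potential is $\tfrac1{p-1}$-homogeneous, $\w(\bar u^q\,d\sigma)=\lambda^{q/(p-1)}\w(W^{\gamma q}\,d\sigma)\le A\lambda^{q/(p-1)}W^{\gamma}$, and hence, using $W^{\gamma}\ge r^{\gamma}$,
\[
\mathcal T(\bar u^q\,d\sigma)\ \le\ r+K_2\,\w(\bar u^q\,d\sigma)\ \le\ \bigl(r^{1-\gamma}+K_2A\,\lambda^{q/(p-1)}\bigr)W^{\gamma}.
\]
Since $q/(p-1)<1$, one may fix $\lambda$ so large that the bracket is $\le\lambda$; then $\mathcal T(\bar u^q\,d\sigma)\le\bar u$, $\bar u$ is a supersolution, $u_k\le\bar u$ for all $k$ by induction, the limit $u$ exists, and $u\le\lambda W^{\gamma}$.

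For the matching lower bound we use the elementary Wolff analogue of \eqref{lowest}: $\w\bigl((\w\mu)^{\beta}\,d\mu\bigr)\ge c(n,p,\beta)\,(\w\mu)^{1+\beta/(p-1)}$ (cf.\ \cite{CV2}). With $\mu=\sigma$, $\beta=\gamma q$, $1+\gamma q/(p-1)=\gamma$, and $W\ge\w\sigma$, this gives $\w(W^{\gamma q}\,d\sigma)\ge c\,(\w\sigma)^{\gamma}$; an argument dual to the previous one then shows that $\underline u:=c_0W^{\gamma}$ is a subsolution, i.e.\ $\underline u\le\mathcal T(\underline u^q\,d\sigma)$, provided $c_0>0$ is small enough (one needs $c_0\lesssim r^{1-\gamma}$ to absorb the constant $r$ on $\{\w\sigma<r\}$ and $c_0^{(p-1-q)/(p-1)}\lesssim K_1c$ to absorb the potential term on $\{\w\sigma\ge r\}$). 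Since $\underline u$ lies below $\bar u$, running the same monotone iteration from $u_0:=\underline u$ produces a solution $u$ with $c_0W^{\gamma}\le u\le\lambda W^{\gamma}$, the asserted two-sided estimate. (Alternatively, one may bootstrap $u\ge r+K_1\w(u^q\,d\sigma)$ starting from $u\ge r$, obtaining $u\ge c_kW^{\beta_k}$ with $\beta_k\uparrow\gamma$ and the $c_k$ bounded below, and let $k\to\infty$.)

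Finally, if $p\ge n$ then $\R^n$ is $p$-parabolic, so every nonnegative $p$-superharmonic function on $\R^n$ is constant. A positive solution $u$ of $-\Delta_pu=\sigma u^q$ is $p$-superharmonic, hence $u\equiv c>0$, whence $0=-\Delta_pu=\sigma c^q$ and $\sigma\equiv0$; so no nontrivial solution exists. The one genuinely hard point in this scheme is \eqref{keyest}, and inside it the regime $0<s<1$ of the bound $\int_B(\w\sigma_B)^s\,d\sigma\le c\,\sigma(B)$ — which is needed here because $\gamma q$ can be arbitrarily small (it tends to $0$ as $q\to0$) and which is precisely the estimate not available in the earlier literature.
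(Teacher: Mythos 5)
Your proposal is correct and follows essentially the same route as the paper: the analytic core is identical, namely the testing inequality $\int_B(\w\sigma_B)^s\,d\sigma\le c\,\sigma(B)$ for \emph{all} $s>0$ derived from \eqref{capacitycond} (Lemma \ref{wolff-capacity}, applied with $s=\frac{(p-1)q}{p-1-q}$), the same near/far splitting of $\w\bigl((\w\sigma)^{\frac{(p-1)q}{p-1-q}}d\sigma\bigr)$ to produce the supersolution comparable to $(r+\w\sigma)^{\frac{p-1}{p-1-q}}$, a monotone iteration under this majorant, the lower bound via \eqref{lowerest}, and a Liouville-type argument for $p\ge n$. The only differences are presentational: you package the exhaustion by balls $B_k$, the comparison principle for Dirichlet problems with $W^{-1,p'}$ data, and the Trudinger--Wang weak continuity into an abstract solution operator $\mathcal{T}\mu=r+W_0(\mu)$ whose monotone dependence on $\mu$ and the limit identity $u=\mathcal{T}(u^q d\sigma)$ are precisely what the paper's $B_k$-construction supplies (and should be justified that way, with the global two-sided bound credited to \cite{PV1}/\cite{KM} rather than \cite{HW}), whereas the paper first solves the integral equation $v=K\w(v^q d\sigma)+r$ of Theorem \ref{inhomocase} and uses $v$ as the majorant, which is also how it obtains the exact value $\liminf_{|x|\to\infty}u=r$ rather than only $u\ge r$ plus a potential upper bound.
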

We observe that in this case we have matching upper and lower estimates of the solution $u$. 

Our next theorem is concerned with the ground state problem \eqref{eq1} 
with $r=0$.
\begin{theorem}\label{conthomothm}
Let $1<p<n$ and $0<q<p-1$. Let $\sigma \in M^{+}(\R^n)$. If both \eqref{Wolfinite} and \eqref{capacitycond} hold, then there exists a mimimal $p$-superharmonic solution $u \in W_{\rm loc}^{1,p}(\R^n)$ to  \eqref{eq1} with $r=0$ such that 
\begin{equation} \label{two-sided}
c^{-1} \Bigl( \w\sigma \Bigr)^{\frac{p-1}{p-1-q}} \le u \le c \left(\w\sigma+\Bigl( \w\sigma \Bigr)^{\frac{p-1}{p-1-q}} \right),
\end{equation}
where $c=c(n,p,q,C(\sigma)) > 0$. In the case $p \ge n$ there are no nontrivial solutions on $\R^n$. 
\end{theorem}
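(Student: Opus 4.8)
The plan is to reduce the ground state problem ($r=0$) to the inhomogeneous one solved in Theorem \ref{continhomothm} by taking $r \to 0^+$, combined with an a priori lower bound that prevents the solutions from collapsing to zero. First I would invoke Theorem \ref{continhomothm} with a sequence $r_j \downarrow 0$: under \eqref{Wolfinite} and \eqref{capacitycond} this yields distributional solutions $u_j \in W^{1,p}_{\rm loc}(\R^n)$ of $-\Delta_p u_j = \sigma u_j^q$ with $\liminf_{x\to\infty} u_j = r_j$, satisfying $c^{-1}(r_j + \w\sigma)^{\frac{p-1}{p-1-q}} \le u_j \le c(r_j + \w\sigma)^{\frac{p-1}{p-1-q}}$. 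The constant $c$ depends on $r_j$, so I would need a uniform upper bound independent of $j$; this is exactly where the two competing terms in \eqref{two-sided} come from. For $r_j \le 1$ one expects $(r_j+\w\sigma)^{\frac{p-1}{p-1-q}} \lesssim (\w\sigma)^{\frac{p-1}{p-1-q}} + 1$, and the constant $1$ must be absorbed, via the equation and the Wolff potential estimate $\re(\sigma (r_j+\w\sigma)^{\frac{q(p-1)}{p-1-q}}) \lesssim \w\sigma + (\w\sigma)^{\frac{p-1}{p-1-q}}$, using a capacity-type bound \eqref{riesz-ball} (or its $p$-Laplacian analogue for $\w$) with $s = \frac{q}{p-1-q}$. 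So the genuine content is proving the uniform upper estimate $u_j \le c(\w\sigma + (\w\sigma)^{\frac{p-1}{p-1-q}})$ with $c$ independent of $j$, which I would derive from the integral reformulation $u_j \approx \w(u_j^q d\sigma)$ (Wolff potential estimates for $p$-superharmonic functions, Kilpel\"ainen--Mal\'y) and an iteration/bootstrap argument feeding the crude bound $u_j \lesssim \w\sigma + 1$ back in.

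Next I would pass to the limit. Since the $u_j$ are $p$-superharmonic (being solutions with a measure right-hand side) and uniformly bounded on compact sets, standard compactness for $p$-superharmonic functions gives a subsequence converging (locally in $L^s$ for $s<\frac{n(p-1)}{n-p}$, and a.e.) to some $p$-superharmonic $u \in W^{1,p}_{\rm loc}(\R^n)$; lower semicontinuity and the uniform lower bound $u_j \ge c^{-1}(\w\sigma)^{\frac{p-1}{p-1-q}}$ give $u \ge c^{-1}(\w\sigma)^{\frac{p-1}{p-1-q}}>0$, so $u \not\equiv 0$, and the uniform upper bound passes to the limit to give the right half of \eqref{two-sided}. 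Stability of $p$-Laplacian equations with measure data (convergence of $\sigma u_j^q \to \sigma u^q$, which is justified by dominated convergence using the uniform bounds and \eqref{Wolfinite}) shows $u$ solves $-\Delta_p u = \sigma u^q$; that $\liminf_{x\to\infty} u = 0$ follows because $\w\sigma(x)\to 0$ as $|x|\to\infty$ under \eqref{Wolfinite} together with the matching upper bound. Minimality among $p$-superharmonic solutions I would obtain by the usual argument: if $v$ is any nonnegative $p$-superharmonic (super)solution, then $v \ge$ the solution of the problem with $\liminf = r_j$ truncated appropriately (comparison principle plus the sub/supersolution method), hence $v \ge u_j$ for each $j$ after a limiting comparison, so $v \ge u$.

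For the nonexistence statement when $p \ge n$: if $u$ were a nontrivial $p$-superharmonic solution, then $u \ge \epsilon > 0$ on some ball, and the lower bound $u \ge c(\w(u^q d\sigma))$ combined with $u \ge \w(\sigma_{B})$-type estimates forces $\w\sigma \not\equiv \infty$ on bounded sets but $\w\sigma(x) \to \infty$ as $|x|\to\infty$ whenever $\sigma \not\equiv 0$ and $p \ge n$ (because $t^{-(n-p)/(p-1)}$ is non-decreasing, so the integral $\int^\infty (\sigma(B(x,t))/t^{n-p})^{1/(p-1)} dt/t$ diverges once $\sigma(B(x,t))$ is bounded below); this contradicts $\liminf_{x\to\infty} u = 0$. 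If $\sigma \equiv 0$ the only $p$-superharmonic solution with $\liminf_{x\to\infty}u = 0$ is $u\equiv 0$ by the minimum principle.

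The main obstacle I expect is the uniform-in-$j$ upper bound $u_j \le c(\w\sigma + (\w\sigma)^{\frac{p-1}{p-1-q}})$: one must show that the additive constant $r_j$ does not blow up the constant $c(\dots, r_j)$ from Theorem \ref{continhomothm} as $r_j \to 0$, and the two-term right-hand side of \eqref{two-sided} is precisely the price paid. Making this rigorous requires the fine potential-theoretic estimate converting $\w(\sigma \, (\w\sigma)^{\frac{q(p-1)}{p-1-q}})$ into $\w\sigma + (\w\sigma)^{\frac{p-1}{p-1-q}}$ under \eqref{capacitycond} — an inequality of the type \eqref{riesz-ball}/\eqref{riesz-integral} with the small exponent $s=\frac{q}{p-1-q}$, whose validity for \emph{all} $s>0$ is, as the introduction emphasizes, one of the new technical inputs of the paper.
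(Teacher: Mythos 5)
Your route ($r_j\downarrow 0$ in Theorem \ref{continhomothm}, a bootstrap for a uniform upper bound, then compactness of $p$-superharmonic functions) differs from the paper's: the paper does not approximate through the inhomogeneous problem at all. It first solves the homogeneous \emph{integral} equation $v=K\mathbf{W}_{1,p}(v^q\,d\sigma)$ (Theorem \ref{homocase}), where the whole point is that $c\bigl(\w\sigma+(\w\sigma)^{\frac{p-1}{p-1-q}}\bigr)$ is a supersolution thanks to the ball estimate \eqref{wolffball} with small exponent (Lemma \ref{wolff-capacity}), and then builds the minimal $p$-superharmonic solution \emph{below the barrier} $v$ by the monotone ball-by-ball iteration of \cite{CV2}, getting the lower bound from \eqref{lowerest}, $W^{1,p}_{\rm loc}$ regularity from the local Wolff inequality plus Lemma 3.3 of \cite{CV2}, and the $p\ge n$ nonexistence from Theorem 2.4(ii) of \cite{CV2}. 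Note that your ``genuine content'' step --- the uniform-in-$j$ bound $u_j\le c\bigl(\w\sigma+(\w\sigma)^{\frac{p-1}{p-1-q}}\bigr)$ via $u_j\le K\w(u_j^q\,d\sigma)+r_j$ and the estimate $\w\bigl((\w\sigma)^{\frac{(p-1)q}{p-1-q}}d\sigma\bigr)\lesssim \w\sigma+(\w\sigma)^{\frac{p-1}{p-1-q}}$ --- is exactly the supersolution computation of Theorems \ref{inhomocase}/\ref{homocase}; since $q/(p-1)<1$ your bootstrap of constants would indeed converge, but you have only sketched it, and once it is written out the detour through $r_j>0$ buys nothing over solving the $r=0$ integral equation directly.

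The genuine gap is the minimality argument. You claim that an arbitrary nonnegative $p$-superharmonic solution $v$ of \eqref{eq1} with $r=0$ satisfies $v\ge u_j$ ``after a limiting comparison.'' This cannot be true: $u_j\ge r_j>0$ everywhere, while $\liminf_{|x|\to\infty}v(x)=0$, so $v\ge u_j$ would force $\liminf_{|x|\to\infty}v\ge r_j$, a contradiction. Hence minimality of your limit $u=\lim u_j$ is not established (and is not obvious for a limit of solutions of the perturbed problems); the standard fix is the paper's: run the increasing iteration $u_0=c_0(\w\sigma)^{\frac{p-1}{p-1-q}}$, $-\Delta_p u_{m+1}=\sigma u_m^q$, under the barrier, and show by induction via \eqref{lowerest} and the comparison principle on balls that every solution dominates each iterate. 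Two smaller points: the limit passage needs $u_j\to u$ $\sigma$-a.e.\ (not just Lebesgue-a.e.) to get $\sigma u_j^q\to\sigma u^q$ before invoking weak continuity, which your compactness argument glosses over (the paper's monotone construction avoids it); and for $p\ge n$ the correct statement is that $\w\sigma\equiv+\infty$ at \emph{every} point whenever $\sigma\not\equiv 0$, so \eqref{lowerest} forces $u\equiv+\infty$ --- not merely that $\w\sigma(x)\to\infty$ as $|x|\to\infty$ contradicting the condition at infinity.
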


See \cite{KM}, \cite{HKM} for the definition of $p$-superharmonic solutions, or equivalently locally renormalized solutions discussed in \cite{BV}, \cite{KKT}.

In the next theorem, we give a necessary and sufficient condition for the existence of a solution to \eqref{eq1} satisfying \eqref{two-sided}. 
\begin{theorem}\label{newthm}
Let $1<p<n, 0<q<p-1$ and $\sigma \in M^{+}(\R^n)$. Then there exists a $p$-superharmonic solution $u$  to \eqref{eq1} with $r=0$ satisfying  \eqref{two-sided} if and only if there exists a constant $\kappa >0$ such that
\begin{equation} \label{newcond}
 \w\left(\left( \w\sigma\right )^{\frac{(p-1)q}{p-1-q}}d\sigma\right) \le \kappa \left( \w\sigma  + \left( \w\sigma\right )^{\frac{p-1}{p-1-q}}\right) < \infty \quad \text{a.e.}
\end{equation}
\end{theorem}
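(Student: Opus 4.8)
The plan is to prove the two implications separately.

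\emph{Necessity.} Suppose $u$ is a $p$-superharmonic solution of \eqref{eq1} with $r=0$ satisfying \eqref{two-sided}. By the pointwise lower bound for $p$-superharmonic solutions in terms of the Wolff potential — valid in each ball, hence on $\R^n$ after letting the truncation radius tend to infinity (see \cite{KM}, \cite{CV2}) — we have $u\ge c_1\,\w(u^q\,d\sigma)$ a.e., with $c_1=c_1(n,p)>0$. By the lower estimate of \cite{CV2} (the $p$-Laplacian analogue of \eqref{lowest}), every nontrivial solution also satisfies $u\ge c_2\,(\w\sigma)^{\frac{p-1}{p-1-q}}$ a.e.; since $u<\infty$ a.e., this forces $\w\sigma<\infty$ a.e. Substituting the second bound into the first and using $\w(t\,d\mu)=t^{\frac{1}{p-1}}\w\mu$ for $t\ge0$, we obtain
$$ c_1\,c_2^{\frac{q}{p-1}}\,\w\!\left((\w\sigma)^{\frac{(p-1)q}{p-1-q}}\,d\sigma\right)\;\le\; c_1\,\w(u^q\,d\sigma)\;\le\; u\;\le\; c\bigl(\w\sigma+(\w\sigma)^{\frac{p-1}{p-1-q}}\bigr), $$
which is \eqref{newcond} with $\kappa=c\,c_1^{-1}c_2^{-q/(p-1)}$, the right-hand side being finite a.e. as just noted.

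\emph{Sufficiency.} Assume \eqref{newcond}; write $\Phi=\w\sigma$ and $\gamma=\frac{p-1}{p-1-q}>1$, so that \eqref{newcond} reads $\w(\Phi^{\gamma q}\,d\sigma)\le\kappa(\Phi+\Phi^{\gamma})<\infty$ a.e.; in particular $\Phi<\infty$ a.e. and \eqref{Wolfinite} holds. I would first construct an explicit \emph{Wolff supersolution} $W=A(\Phi+\Phi^{\gamma})$ for a large constant $A$, meaning $K_2\,\w(W^q\,d\sigma)\le W$ a.e., where $K_2$ is the constant in the upper Wolff potential estimate for the ground-state problem. Using the quasi-superadditivity $\w(\mu+\nu)\le C_p(\w\mu+\w\nu)$, the elementary bounds $(\Phi+\Phi^{\gamma})^q\le C_q(\Phi^q+\Phi^{\gamma q})$ and $\Phi^q\le 1+\Phi^{\gamma q}$ (the latter since $\gamma\ge1$), and $\w\sigma=\Phi$, one deduces from \eqref{newcond} first that $\w(\Phi^q\,d\sigma)\le C(\Phi+\Phi^{\gamma})$ and then
$$ \w(W^q\,d\sigma)=A^{\frac{q}{p-1}}\,\w\!\left((\Phi+\Phi^{\gamma})^q\,d\sigma\right)\;\le\; A^{\frac{q}{p-1}}\,M\,(\Phi+\Phi^{\gamma}), $$
with $M=M(n,p,q,\kappa)$. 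Since $0<q<p-1$ we have $\frac{q}{p-1}<1$, so choosing $A\ge\max\{c_0,\,(K_2M)^{\gamma}\}$ yields $K_2\,\w(W^q\,d\sigma)\le W$ and $W\ge c_0\Phi^{\gamma}$, where $c_0>0$ is a small constant for which $c_0\Phi^{\gamma}$ is a Wolff subsolution, i.e.\ $c_0\Phi^{\gamma}\le K_1\,\w((c_0\Phi^{\gamma})^q\,d\sigma)$; such $c_0$ exists by the $p$-Laplacian analogue of the fact recalled after \eqref{Rieszfinite}, which rests on \eqref{Wolfinite} and the pointwise inequality $\w(\Phi^{\gamma q}\,d\sigma)\ge c\,\Phi^{\gamma}$.

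Then I would run the monotone iteration: set $u_0=c_0\Phi^{\gamma}$ and let $u_{k+1}$ be the minimal $p$-superharmonic solution of $-\Delta_p u_{k+1}=\sigma u_k^q$ with $\liminf_{x\to\infty}u_{k+1}=0$, which exists since $\w(u_k^q\,d\sigma)\le\w(W^q\,d\sigma)<\infty$ a.e.\ by induction. The subsolution property together with the lower Wolff bound gives $u_0\le u_1$; the comparison principle gives $u_k\le u_{k+1}$ for all $k$; and the upper Wolff bound together with the supersolution property of $W$ gives $u_k\le W$ for all $k$. Consequently $u=\lim_k u_k$ is finite a.e., is $p$-superharmonic, solves \eqref{eq1} with $r=0$ by the closure/stability properties of $p$-superharmonic solutions with measure data (which also give $u\in W^{1,p}_{\rm loc}(\R^n)$ and $u^q\in L^1_{\rm loc}(d\sigma)$), satisfies $c_0\Phi^{\gamma}\le u\le A(\Phi+\Phi^{\gamma})$ — that is, \eqref{two-sided} — and obeys $\liminf_{x\to\infty}u=0$, by the upper bound and $\liminf_{|x|\to\infty}\w\sigma=0$ (a consequence of \eqref{Wolfinite}).

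The principal obstacle is the verification that $W=A(\Phi+\Phi^{\gamma})$ is a Wolff supersolution: it is here that the sublinearity $q<p-1$ is essential (to leave room for enlarging $A$), that the cross-term $\w(\Phi^q\,d\sigma)$ must be absorbed into $\Phi+\Phi^{\gamma}$ via \eqref{newcond}, and that the non-homogeneity of the Wolff potential must be handled. Once $W$ is in hand, the iteration and the passage to the limit are standard $p$-Laplacian potential theory, and the necessity direction is comparatively routine.
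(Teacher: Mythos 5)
Your proposal is correct and follows essentially the same route as the paper: necessity via the lower Wolff-potential estimate $u\ge c\,\w(u^q d\sigma)$ combined with $u\ge c(\w\sigma)^{\frac{p-1}{p-1-q}}$ and the upper bound in \eqref{two-sided}, and sufficiency by showing that $A\left(\w\sigma+(\w\sigma)^{\frac{p-1}{p-1-q}}\right)$ is a supersolution of the Wolff integral inequality under \eqref{newcond} (this is exactly the content of Theorem \ref{mainthm}, which the paper invokes, with your pointwise Young-type absorption of the cross term replacing the paper's H\"older/Young argument on balls) and then passing to a $p$-superharmonic solution by the monotone iteration with the global Wolff estimates, as in \cite{CV2}.
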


The plan of the paper is as follows. 
In Section \ref{capWolff}, we study the equivalence between the capacity condition \eqref{cap2-cond} and condition \eqref{riesz-ball} for all $s>0$. In Section \ref{solnintegral}, we study integral equations closely related to \eqref{eq1}. Section \ref{proofthm} is devoted to a proof of our main results regarding equation \eqref{eq1}. In Section \ref{radial}, we consider the case where $\sigma$ is radially symmetric, and give an explicit condition for the existence of a radial solution to \eqref{frac}, together with matching bilateral pointwise estimates of such a solution. We also provide a proof of  \eqref{char}, and conclude with an example where there is a solution to \eqref{sub-integral}, but condition \eqref{newcond0} fails, and 
consequently the upper estimate of Brezis--Kamin type in \eqref{twosdhomo} is no longer true.

\section{Capacity condition and Wolff potential estimates } \label{capWolff}

For $p >1$, we define the $(\al,p)$-capacity of a subset $E \subset \R^n$ by 
\begin{equation} \label{Rieszcapa}
\text{cap}_{\alpha,p}(E)=\inf\{||f||^p_{L^p(\R^n)} : \I_{\alpha}f \ge 1 \text { on } E,\quad f \in{L^p_+(\R^n)} \},
\end{equation}
where   $\I_{\alpha}\sigma$ is the Riesz potential of order $\alpha$ defined for $0< \alpha <n$ by
\begin{equation} \label{Rieszpote}
\I_{\alpha}\sigma(x)=\int_0^{\infty} \frac{\sigma(B(x,r))}{r^{n-\alpha}}\frac{dr}{r}, \quad x \in \R^n.
\end{equation}
Notice that $\rm{cap}_{1,p}(E) \approx \rm{cap}_p(E)$ for all compact sets $E$ (see \cite{AH}).

\begin{lemma} \label{wolff-capacity} Let $\sigma \in M^{+}(\R^n)$ and suppose that
\begin{equation}\label{capcond}
\sigma(E) \le C\, {\rm cap}_{\alpha,p}(E),\quad \text { for all compact sets } E \subset R^n.
\end{equation}
Then, for every $s>0$, the following inequality holds: 
\begin{equation} \label{wolffintegral}
\int_E (\wa\sigma_E)^s d\sigma \le c \, \sigma(E), \quad \text { for all compact sets } E. 
\end{equation}
 Conversely, if \eqref{wolffintegral} holds for a given $s>0$, then \eqref{capcond} holds;  consequently, \eqref{wolffintegral} holds for every $s>0$.
\end{lemma}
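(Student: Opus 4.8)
The plan is to prove the two implications separately, exploiting the fact that the hard direction --- $\eqref{capcond} \Rightarrow \eqref{wolffintegral}$ --- only needs to be established for one convenient exponent, say a large integer $s$, because the reverse implication will then recover $\eqref{capcond}$ and hence $\eqref{wolffintegral}$ for \emph{all} $s>0$.

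First I would treat the easy direction: assume $\eqref{wolffintegral}$ holds for some fixed $s>0$ and deduce $\eqref{capcond}$. Fix a compact $E$ and test the capacity $\text{cap}_{\alpha,p}(E)$ against a suitable function built from $\sigma_E$. The natural candidate is $f = c\,(\mathbf{M}_{\alpha,p}\sigma_E)$-type object, but more efficient is to use the equivalence (Havin--Maz'ya / Hedberg--Wolff) between the $(\alpha,p)$-capacity and Wolff potential energy: for an equilibrium-type measure $\mu$ supported on $E$ one has $\text{cap}_{\alpha,p}(E) \approx \mu(E)$ whenever $\wa\mu \le 1$ on $\text{supp}\,\mu$. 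Concretely, by the standard Wolff potential characterization of capacity, it suffices to produce from $\sigma_E$ a measure comparable to $\sigma_E$ whose Wolff potential is bounded; but that is exactly what $\eqref{wolffintegral}$ gives after a Chebyshev/truncation argument on the set where $\wa\sigma_E$ is large. So this direction reduces to the known identity $\sigma(E)\lesssim \text{cap}_{\alpha,p}(E)$ once we know $\int_E (\wa\sigma_E)^s\,d\sigma \lesssim \sigma(E)$, via the dual formulation $\text{cap}_{\alpha,p}(E) \approx \sup\{\mu(E)^p / \|\wa\mu\|_{L^{?}}^{?} : \dots\}$ --- I would cite \cite{AH}, \cite{HW}, \cite{Maz11} here rather than reprove it.

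For the main direction, assume the capacity condition $\eqref{capcond}$. I would fix $s = m$ a positive integer (the general case then follows from the converse, as noted) and expand
$$
\int_E (\wa\sigma_E)^m \, d\sigma
= \int_E \left( \int_0^\infty \Bigl(\tfrac{\sigma_E(B(x,t))}{t^{n-\alpha p}}\Bigr)^{\frac{1}{p-1}} \tfrac{dt}{t}\right)^{\!m} d\sigma(x),
$$
and discretize: replace the integral over $t$ by a sum over dyadic scales $t \approx 2^{-j}$ and cover $\R^n$ at each scale by a lattice of cubes (or balls), reducing the estimate to a discrete sum over dyadic cubes $Q$ of quantities $\bigl(\sigma(3Q)/\ell(Q)^{n-\alpha p}\bigr)^{1/(p-1)}$. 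The key tool is the discrete Wolff-type inequality / strong capacitary inequality of Maz'ya: the capacity condition $\eqref{capcond}$ is equivalent to a Carleson-type embedding for the discrete Wolff potential, which yields $\sum_Q \bigl(\tfrac{\sigma(Q)}{\ell(Q)^{n-\alpha p}}\bigr)^{1/(p-1)} \sigma(Q) \lesssim \sigma(E)$ and, more generally, bounds for the $m$-fold products. I would handle the $m$-fold product by an induction on $m$: peel off one factor of $\wa\sigma_E$, use the pointwise bound $\wa\sigma_E(x) \le \wa\sigma_E(x)$ trivially but then invoke that the measure $(\wa\sigma_E)^{m-1} d\sigma$ still satisfies a capacity condition with a larger constant (this is where $\eqref{wolffintegral}$ for smaller exponents, obtained inductively, feeds back in), and apply the base case $m=1$, which is precisely Maz'ya's inequality $\int_E \wa\sigma_E \, d\sigma \lesssim \text{cap}_{\alpha,p}(E) \lesssim \sigma(E)$.

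The main obstacle will be the rigorous handling of the $m$-fold (or real $s$-th power) product of Wolff potentials: a naive expansion produces cross terms at many different scales, and one must organize them by the ``smallest scale wins'' principle --- i.e., in a product $\prod_{i=1}^m \bigl(\sigma(B(x,t_i))/t_i^{n-\alpha p}\bigr)^{1/(p-1)}$ the dominant contribution comes from balls of comparable (smallest) radius --- which is the standard but delicate lemma underlying iterated Wolff potential estimates (cf.\ \cite{JV1}, \cite{JV2}, \cite{V1}). For non-integer $s$ I would either interpolate between consecutive integers or, more cleanly, run the same dyadic argument directly with the power $s$, since the combinatorial lemma about products of nested ball-averages works for real powers once one uses the elementary inequality $(\sum a_j)^s \le C_s \sum a_j \bigl(\sum_{k \le j} a_k\bigr)^{s-1}$ for $s \ge 1$ (and a dual inequality for $0 < s < 1$), reducing everything to the base case again. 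Finally, I would note that for $0<s<1$ the inequality $\eqref{wolffintegral}$ follows from the case $s=1$ by Jensen/Hölder with respect to $d\sigma$ localized to $E$, so only $s \ge 1$ requires the dyadic machinery.
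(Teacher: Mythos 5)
Your forward direction contains two genuine problems. First, the opening reduction is circular: you cannot establish \eqref{wolffintegral} for a single integer $s=m$ and then claim ``the general case follows from the converse,'' since deducing \eqref{wolffintegral} for all $s>0$ from \eqref{capcond} is exactly the implication being proved. Your fallback remarks (Jensen/H\"older for $0<s<1$ from $s=1$, interpolation between consecutive integers, or running the dyadic argument directly with real $s$) do repair this, and the last option is what the paper actually does: it averages the truncated Wolff potential over shifted dyadic lattices, applies Minkowski's inequality, and then uses Proposition 2.2 of \cite{COV} --- precisely the discrete inequality $(\sum_j a_j)^s\lesssim\sum_j a_j(\sum_{k\le j}a_k)^{s-1}$ you invoke --- after which the key point is that the localized factor $\sigma(Q)^{-1}\sum_{R\subset Q}\bigl[\sigma(R)/|R|^{1-\alpha p/n}\bigr]^{\frac{1}{p-1}}\sigma(R)$ is \emph{uniformly} bounded (Lemma 4.7 of \cite{JV1}), which is a localized energy form of \eqref{capcond}; your global Carleson bound $\sum_Q\lesssim\sigma(E)$ alone does not close the argument, so this input is missing from your sketch. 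Second, the induction on $m$ has a real gap: after peeling one factor you must estimate the mixed quantity $\int_E \wa\sigma_E\,d\omega$ with $d\omega=(\wa\sigma_E)^{m-1}d\sigma_E$, and since the Wolff potential is nonlinear (no symmetry for $p\neq 2$) the base case, which controls $\int\wa\omega\,d\omega$, does not apply; moreover the claim that $\omega$ satisfies a capacity condition does not follow from the inductive hypothesis, which controls $\wa\sigma_F$ on $F\subset E$ but not $\wa\sigma_E$ on $F$. Note also that your chain ``$\int_E\wa\sigma_E\,d\sigma\lesssim{\rm cap}_{\alpha,p}(E)\lesssim\sigma(E)$'' has its last inequality backwards; the fact you need (true, and used by the paper) is $\int_E\wa\sigma_E\,d\sigma\lesssim\sigma(E)$, obtained from Maz'ya's equivalence of \eqref{capcond} with $\|\ra\sigma_E\|_{L^{p'}}^{p'}\lesssim\sigma(E)$ together with Wolff's inequality.

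Your converse is essentially sound but follows a different route from the paper. Chebyshev applied to \eqref{wolffintegral} lets you restrict $\sigma_E$ to a compact subset carrying at least half the mass on which $\wa\sigma_E\le(2c)^{1/s}$; then the Wolff energy of the restricted measure is $\lesssim\sigma(E)$, and the dual (energy) characterization of ${\rm cap}_{\alpha,p}$ together with Wolff's inequality yields $\sigma(E)\lesssim{\rm cap}_{\alpha,p}(E)$ --- this works, though you should state the truncation step explicitly rather than gesture at equilibrium measures. The paper instead writes $\sigma(E)=\int_E(\wa\sigma_E)^{-\beta}(\wa\sigma_E)^{\beta}\,d\sigma$, applies H\"older with $\beta r=p-1$, $\beta r'=s$, and quotes Theorem 1.11 of \cite{V1}, $\int_E(\wa\sigma_E)^{-(p-1)}d\sigma\le c\,{\rm cap}_{\alpha,p}(E)$; the two arguments are of comparable difficulty, yours trading that theorem for the Chebyshev truncation.
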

\begin{proof}
Condition  \eqref{capcond} is known to be equivalent to the non-capacitary 
condition (see \cite{Maz11}, \cite{V1})
\begin{equation}\label{KS}
\int_{\R^n} (\ra \sigma_E)^{p'}dx \le c \,\sigma(E),
\end{equation}
for all compact sets $E \subset \R^n$. By Wolff's inequality (\cite{AH}), we obtain 
\begin{equation} \label{wolffine}
\int_{\R^n} (\ra\sigma_E)^{p'}dx \ge c\, \int_{\mathbb{R}^n} \wa\sigma_E \,d\sigma_E =c\int_{E} \wa\sigma_E\,d\sigma. 
\end{equation}
Hence,
\begin{equation} \label{wolff3} 
\int_{E} \wa\sigma_E\, d\sigma_E \le c\, \sigma(E), \quad \text { for all compact sets } E.
\end{equation}

If $0< s < 1$, then clearly the preceding estimate, together with  H\"older's inequality, yields \eqref{wolffintegral}.
If $s >1 $, we will show that \eqref{wolffintegral} holds by using a shifted dyadic lattice 
$\dy_t$ as in \cite{COV}. Let $E$ be a compact set in $\R^n$. Then we have the following estimate for the truncated Wolff potential $\mathbf{W}^r_{\alpha,p}\sigma_E$ (see \cite{COV}), 
$$\mathbf{W}^r_{\alpha,p}\sigma_E(x) \le c_1\,r^{-n} \int_{|t|\le cr}\sum_{Q \in \dy} \left[\frac{\sigma_E(Q+t)}{|Q+t|^{1-\frac{\alpha p}{n}}} \right]^{\frac{1}{p-1}}\chi_{Q+t}(x)dt.$$
Raising both sides to the power $s$, integrating over $E$ with respect to $d\sigma_E$, and using Minkowski's inequality, we obtain 
\begin{equation*}
\begin{aligned} 
& \int_{E}(\mathbf{W}^r_{\alpha,p}\sigma_E(x))^s \,d\sigma_E \\ 
& \le c_1\,\left(r^{-n} \int_{|t|\le cr} \left( \int_{E}\left[\sum_{Q \in \dy} \left[\frac{\sigma_E(Q+t)}{|Q+t|^{1-\frac{\alpha p}{n}}} \right]^{\frac{1}{p-1}}\chi_{Q+t}(x)\right]^s \, d\sigma_E \right)^{\frac{1}{s}}dt\right)^s.
  \end{aligned}
  \end{equation*}
Applying Proposition 2.2 in \cite{COV}, we have
\begin{equation*}
\begin{aligned} 
& \int_{E}\left[\sum_{Q \in \dy} \left[\frac{\sigma_E(Q+t)}{|Q+t|^{1-\frac{\alpha p}{n}}} \right]^{\frac{1}{p-1}}\chi_{Q+t}(x)\right]^s\,d\sigma_E \\
& \le c \, \sum_{Q \in \dy} \left[\frac{\sigma_E(Q+t)}{|Q+t|^{1-\frac{\alpha p}{n}}} \right]^{\frac{1}{p-1}}\sigma_E(Q+t)\\
& \times \left( \frac{1}{\sigma_E(Q+t)}\sum_{R\subset Q}
\left[\frac{\sigma_E(R+t)}{|R+t|^{1-\frac{\alpha p}{n}}} \right]^{\frac{1}{p-1}}\sigma_E(R+t)\right)^{s-1}.
 \end{aligned}
  \end{equation*}
Using Lemma 4.7 in \cite{JV1}, we see that the last factor is uniformly bounded, and hence by the same lemma 
$$\int_{E}\left[\sum_{Q \in \dy} \left[\frac{\sigma_E(Q+t)}{|Q+t|^{1-\frac{\alpha p}{n}}} \right]^{\frac{1}{p-1}}\chi_{Q+t}(x)\right]^s\,d\sigma_E  \le c \, \sigma(E).$$
Hence,
$$\int_{E}(\mathbf{W}^r_{\alpha,p}\sigma_E)^s \,d\sigma \le c_1\,\left(r^{-n} \int_{|t|\le cr} \left( c\,\sigma(E) \right)^{\frac{1}{s}}dt\right)^s \le c\, \sigma(E).$$  
Letting $r \to \infty$ and using the Monotone Convergence Theorem, we deduce 

$$\int_{E}(\wa\sigma_E)^s\,d\sigma \le c\, \sigma(E),$$ 
where the constant $c>0$ depends on $\al, p, n, s$ and $C$. 

Conversely, suppose that \eqref{wolffintegral} holds for a fixed $s>0$. Let $E$ be a compact set in $\R^n$ and suppose that $\sigma(E) > 0$. We write 
$$\sigma(E)=\int_E (\wa\sigma_E)^{-\beta} (\wa\sigma_E)^{\beta}d\sigma ,$$
where $\beta > 0$ will be chosen later. Using H\"older's inequality, we have
$$\sigma(E)\le \left(\int_E (\wa\sigma_E)^{-\beta r}d\sigma\right)^{\frac{1}{r}}\left(\int_E(\wa\sigma_E)^{\beta r'}d\sigma\right)^{\frac{1}{r'}},$$
where $ r >1$ and $r'=\frac{r}{r-1}$. Let us choose $\beta$ and $r$ such that $\beta r=p-1$ and  $\beta r'=s$; hence, $r=1+ \frac{p-1}{s} $ and $\beta=\frac{s(p-1)}{s+p-1}$. Consequently,
$$\sigma(E)\le \left(\int_E \frac{d\sigma_E}{(\wa\sigma_E)^{p-1}}\right)^{\frac{1}{r}}\left(\int_E(\wa\sigma_E)^s d\sigma\right)^{\frac{1}{r'}}.$$
Applying Theorem 1.11 in \cite{V1}, we have
$$\int_E \frac{d\sigma_E}{(\wa\sigma_E)^{p-1}} \le c\, \text{cap}_{\alpha,p}(E).$$
Combining this estimate and \eqref{wolffintegral} yields 
$$\sigma(E)\le c\left(\text{cap}_{\alpha,p}(E)\right)^{\frac{1}{r}}\left(\sigma(E)\right)^{\frac{1}{r'}}.$$
This proves 
$\sigma(E)\le c \, \text{cap}_{\alpha,p}(E)$.
Consequently, arguing as at the beginning of the proof, we also see that \eqref{wolffintegral} holds for any $s>0$. This completes the proof of  Lemma \ref{wolff-capacity}.
\end{proof}

\begin{rmk}
{\rm Suppose that \eqref{capcond} holds. Then from \eqref{wolffintegral} 
it follows  that, for any $s>0$, 
\begin{equation} \label{wolffball}
\int_Q (\wa\sigma_Q)^s d\sigma \le c(\al, p, n, C, s) \, \sigma(Q), 
\end{equation}
for all cubes $Q$ in $\R^n$.}

{\rm It can be shown using some results of the second author on duality 
of discrete Littlewood--Paley spaces 
(see, e.g., \cite{CO}, Lemma 4.5) that if \eqref{wolffball} holds for some  $s>0$, then it also holds for any $s>0$; consequently, \eqref{capcond} holds. 

The fact that  \eqref{wolffball} does not depend on $s$  was proved for $s>\frac{1}{2}$  by  Nazarov, Treil and Volberg \cite{NTV03}  using the Bellman function method.}
\end{rmk}

\section{Solutions of the nonlinear integral equation} \label{solnintegral}
\subsection{Inhomogeneous problems} 

Consider the equation
\begin{equation} \label{continteq}
u = \wa(u^qd\sigma )+ r, \quad u >0,
\end{equation}
where $r >0$, and $u \in L^{q}_{\rm loc}(\R^n, d\sigma)$. We recall that if $ u \ge \wa (u^qd\sigma)$, then
\begin{equation} \label{lowerest}
u(x) \ge c\, \left(\wa\sigma(x)\right)^{\frac{p-1}{p-1-q}}, \quad x \in \R^n,
\end{equation}
where $c=c(n,p,q,\al)>0$ (Theorem 3.4 in \cite{CV2}). Therefore, a necessary condition for the existence of a nontrivial solution to \eqref{continteq} is that $\wa\sigma \not \equiv \infty$, or equivalently,
\begin{equation} \label{Wolff-finite}
\int_1^{\infty} \left(\frac{\sigma(B(0,t))}{t^{n- \al p}}\right)^{\frac{1}{p-1}}\frac{dt}{t} < \infty.
\end{equation} 

\begin{theorem} \label{inhomocase}
Let $r >0, 1<p<n, 0 < q <p-1$, and $0 < \al < \frac{n}{p}$. Let  $\sigma \in M^+(\R^n)$. Suppose that both \eqref{capcond}  and \eqref{Wolff-finite}  hold. Then there exists a solution $u$ to  \eqref{continteq} such that  
\begin{equation} \label{match-est}
c^{-1}\,\left(r + (\wa\sigma)^{\frac{p-1}{p-1-q}}\right) \le u \le c \, \left(r+(\wa\sigma)^{\frac{p-1}{p-1-q}}\right),
\end{equation}
where $c=c(n,p,q,\alpha,C,r)$. Moreover, $u \in L^{s}_{\rm loc}(\R^n, d\sigma)$, for every $s>0$.
\end{theorem}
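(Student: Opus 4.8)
The plan is to construct the solution $u$ to \eqref{continteq} by iteration, starting from a carefully chosen initial subsolution and using the assumptions \eqref{capcond} and \eqref{Wolff-finite} to trap the iterates between two multiples of $r + (\wa\sigma)^{\frac{p-1}{p-1-q}}$. First I would record the two elementary pointwise facts that drive the argument: (i) the \emph{weak maximum principle / subadditivity} type estimate for Wolff potentials, namely $\wa(f+g) \le C(\wa f + \wa g)$ (which follows from $(a+b)^{1/(p-1)} \le C(a^{1/(p-1)}+b^{1/(p-1)})$ applied inside the radial integral), and (ii) the lower bound \eqref{lowerest}, i.e. that any $u$ with $u \ge \wa(u^q d\sigma)$ satisfies $u \ge c_0 (\wa\sigma)^{\frac{p-1}{p-1-q}}$. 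Fact (ii), together with the observation already announced in the introduction, shows that $\underline u := c_1\big(r + (\wa\sigma)^{\frac{p-1}{p-1-q}}\big)$ is a subsolution of \eqref{continteq} for a suitable small $c_1 = c_1(n,p,q,\alpha) > 0$: one checks $\wa(\underline u^q d\sigma) + r \ge \underline u$ by splitting $\underline u^q \gtrsim r^q + (\wa\sigma)^{\frac{(p-1)q}{p-1-q}}$ and using the known self-improving estimate $\wa\big((\wa\sigma)^{\frac{(p-1)q}{p-1-q}} d\sigma\big) \gtrsim (\wa\sigma)^{\frac{p-1}{p-1-q}}$ (a reverse-type inequality that is standard for Wolff potentials, or can be cited from \cite{CV2}).

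The heart of the matter is the \textbf{upper barrier}. I want to show $\overline u := c_2\big(r + (\wa\sigma)^{\frac{p-1}{p-1-q}}\big)$ is a supersolution for $c_2$ large, i.e. $\wa(\overline u^q d\sigma) + r \le \overline u$. Expanding, this reduces (after absorbing constants) to the key inequality
\begin{equation*}
\wa\Big( \big(\wa\sigma\big)^{\frac{(p-1)q}{p-1-q}} d\sigma \Big) \le C \Big( \wa\sigma + \big(\wa\sigma\big)^{\frac{p-1}{p-1-q}} \Big)
\end{equation*}
together with the simpler bound $\wa(r^q d\sigma) = r^{\frac{q}{p-1}} \wa\sigma \le C r^{\frac{q}{p-1}}(\wa\sigma + (\wa\sigma)^{\frac{p-1}{p-1-q}})$. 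So the whole scheme hinges on proving that pointwise inequality, which is precisely condition \eqref{newcond} of Theorem~\ref{newthm} — and here is where \eqref{capcond} enters: by Lemma~\ref{wolff-capacity}, \eqref{capcond} gives \eqref{wolffball}, i.e. $\int_Q (\wa\sigma_Q)^s d\sigma \le c\,\sigma(Q)$ for every cube $Q$ and every $s > 0$, in particular for $s = \frac{(p-1)q}{p-1-q}$. One then expands $\wa\big((\wa\sigma)^{s} d\sigma\big)(x) = \int_0^\infty \big(\sigma(B(x,t))^{-1}\int_{B(x,t)} (\wa\sigma)^s d\sigma\big)^{1/(p-1)} \big(\sigma(B(x,t))/t^{n-\alpha p}\big)^{1/(p-1)} \frac{dt}{t}$, splits $\wa\sigma$ inside the inner integral into the part coming from scales $\le t$ (controlled by $\wa\sigma_{B(x,2t)}$, handled by \eqref{wolffball}) and the part from scales $\ge t$ (which is roughly constant on $B(x,t)$ and comparable to $\wa\sigma(x)$ up to the tail), and reassembles. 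This is the main technical obstacle and the step I expect to be longest; it is essentially the ``only if $\Rightarrow$ if'' direction of Theorem~\ref{newthm} restricted to the capacitary case, and the dyadic/Littlewood--Paley machinery of \cite{COV}, \cite{JV1} used in Lemma~\ref{wolff-capacity} is exactly what makes it go through.

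With the barriers $\underline u \le \overline u$ in hand (after possibly enlarging $c_2$ relative to $c_1$), I would run the monotone iteration $u_0 = \underline u$, $u_{j+1} = \wa(u_j^q d\sigma) + r$. Since $t \mapsto t^q$ is increasing and $\wa(\cdot)$ is monotone, $\underline u \le u_0 \le u_1 \le \cdots \le \overline u$; the sequence increases and is bounded above by $\overline u$, so $u := \lim_j u_j$ exists pointwise and, by monotone convergence inside the Wolff potential, solves $u = \wa(u^q d\sigma) + r$ with $\underline u \le u \le \overline u$, which is exactly \eqref{match-est}. Finiteness $\wa\sigma \not\equiv \infty$ from \eqref{Wolff-finite} guarantees $\overline u$ is finite a.e., so $u < \infty$ a.e. and in particular $u \in L^q_{\rm loc}(d\sigma)$. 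Finally, for the last assertion $u \in L^s_{\rm loc}(\R^n, d\sigma)$ for every $s > 0$: since $u \approx r + (\wa\sigma)^{\frac{p-1}{p-1-q}}$, it suffices to show $\int_Q (\wa\sigma)^{\frac{(p-1)s}{p-1-q}} d\sigma < \infty$ for each cube $Q$; split $\wa\sigma \le \wa\sigma_{2Q} + C(\text{tail})$ where the tail is a finite constant on $Q$ by \eqref{Wolff-finite}, and apply \eqref{wolffball} with exponent $\frac{(p-1)s}{p-1-q} > 0$ to the local part. This closes the proof.
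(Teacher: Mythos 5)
Your proposal follows essentially the same route as the paper: the same subsolution and supersolution candidates $c_0\bigl(r+(\wa\sigma)^{\frac{p-1}{p-1-q}}\bigr)$ and $c\bigl(r+(\wa\sigma)^{\frac{p-1}{p-1-q}}\bigr)$, with the supersolution verified by exactly the paper's scale-splitting of $\int_{B(x,t)}(\wa\sigma)^{\frac{(p-1)q}{p-1-q}}d\sigma$ into the local part controlled by \eqref{wolffball} (via Lemma \ref{wolff-capacity}) and the tail part comparable to $\wa\sigma(x)$, followed by monotone iteration with the Monotone Convergence Theorem and the $L^s_{\rm loc}(d\sigma)$ claim from \eqref{wolffball}. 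The only cosmetic differences are that you phrase the key step as the pointwise condition \eqref{newcond} while the paper carries out the computation directly, and you justify the subsolution via the lower bound \eqref{lowerest}/\cite{CV2} rather than citing Lemma 3.2 of \cite{CV1}; both are correct.
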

\begin{proof}
Let $w=c_0\Big(r+ \left(\wa\sigma\right)^{\frac{(p-1)}{p-1-q}}\Big )$, where $c_0>0$ is a sufficiently small constant. Using Lemma 3.2 in \cite{CV1}, we see that $w$ is a subsolution to \eqref{continteq}.
Let $$v=c\,\left(r+ \left(\wa\sigma\right)^{\frac{(p-1)}{p-1-q}}\right),$$
where $c>0$ is a large constant to be determined later. We will show that $v$ is a supersolution of \eqref{continteq}. First, we estimate
\begin{equation*}
\begin{aligned} 
& \wa(v^qd\sigma)(x) =\int_0^{\infty} \left( \frac{\int_{B(x,t)}v^qd\sigma}{t^{n-\al p}}\right)^{\frac{1}{p-1}}\frac{dt}{t}\\
& \le c^{\frac{q}{p-1}}c_1\left(r^{\frac{q}{p-1}}\,\wa\sigma+ \int_0^{\infty} \left(\frac{\int_{B(x,t)}(\wa\sigma)^{\frac{(p-1)q}{p-1-q}}d\sigma}{t^{n-\al p}}\right)^{\frac{1}{p-1}}\frac{dt}{t}\right).
\end{aligned}
\end{equation*} 
Letting $\beta=\frac{(p-1)q}{p-1-q}$, we next estimate 
\begin{equation*}
\begin{aligned} 
& \int_{B(x,t)}(\wa\sigma)^{\beta}d\sigma  =\int_{B(x,t)}\left[\int_0^{\infty} \left(\frac{ \sigma(B(y,s))}{s^{n-\al p}}\right)^{\frac{1}{p-1}}\frac{ds}{s}\right]^{\beta}d\sigma(y)\\
& \le c_1\,\int_{B(x,t)}\left[\int_0^{t} \left(\frac{ \sigma(B(y,s))}{s^{n-\al p}}\right)^{\frac{1}{p-1}}\frac{ds}{s}\right]^{\beta}d\sigma(y)\\
& +c_1\,\int_{B(x,t)}\left[\int_t^{\infty} \left(\frac{ \sigma(B(y,s))}{s^{n-\al p}}\right)^{\frac{1}{p-1}}\frac{ds}{s}\right]^{\beta}d\sigma(y)=c_1\,(I + II).
\end{aligned}
\end{equation*}

For $y \in B(x,t)$ and $s \ge t$, we have $ B(y,s) \subset B(x,2s)$. Hence, 
\begin{equation*}
\begin{aligned} 
II & \le \int_{B(x,t)}\left[\int_t^{\infty} \left(\frac{ \sigma(B(x,2s))}{s^{n-\al p}}\right)^{\frac{1}{p-1}}\frac{ds}{s}\right]^{\beta}d\sigma(y)\\
& = \sigma(B(x,t))\left[\int_t^{\infty} \left(\frac{ \sigma(B(x,2s))}{s^{n-\al p}}\right)^{\frac{1}{p-1}}\frac{ds}{s}\right]^{\beta}\\ &\le c_1\, \sigma(B(x,t))[\wa\sigma(x)]^{\beta},
\end{aligned}
\end{equation*} 
where $c_1=c_1(\al, n, p, q)$. For $s\le t$, we have $B(y,s) \subset B(x,2t)$, so
\begin{equation*}
\begin{aligned} 
I &=\int_{B(x,t)}\left[\int_0^{t} \left(\frac{ \sigma(B(y,s)\cap B(x,2t))}{s^{n-\al p}}\right)^{\frac{1}{p-1}}\frac{ds}{s}\right]^{\beta}d\sigma(y)\\
& \le \int_{B(x,2t)}\left[\int_0^{t} \left(\frac{ \sigma(B(y,s)\cap B(x,2t))}{s^{n-\al p}}\right)^{\frac{1}{p-1}}\frac{ds}{s}\right]^{\beta}d\sigma(y)\\
&\le \int_{B(x,2t)}\left[\wa\sigma_{B(x,2t)}\right]^{\beta}d\sigma.
\end{aligned}
\end{equation*} 

Using \eqref{wolffball}, we obtain 
$$\int_{B(x,2t)}\left[\wa\sigma_{B(x,2t)}\right]^{\beta}d\sigma\le c_2 \,\sigma(B(x,2t),$$
where $c_2=c_2(\al, n, p, q, C)$. Thus,
\begin{equation*} 
\int_{B(x,t)}(\wa\sigma)^{\beta}d\sigma \le c_1\,[\wa\sigma(x)]^{\beta} \sigma(B(x,t) +c_2\,\sigma(B(x,2t)).
\end{equation*}
Consequently,
\begin{equation*}
\begin{aligned} 
\left[\int_{B(x,t)}(\wa\sigma)^{\beta}d\sigma\right]^{\frac{1}{p-1}}
& \le c_1\,(\wa\sigma(x))^{\frac{q}{p-1-q}}[\sigma(B(x,t))]^{\frac{1}{p-1}}\\ & +c_2\,[\sigma(B(x,2t))]^{\frac{1}{p-1}}. 
\end{aligned}
\end{equation*} 
Hence, 
\begin{equation*}
\begin{aligned} 
\wa(v^qd\sigma) & 
\le c^{\frac{q}{p-1}}\tilde{c} \left( r^{\frac{q}{p-1}}\,\wa\sigma 
+  c_2\,\wa\sigma + c_1\,\left(\wa\sigma \right)^{\frac{p-1}{p-1-q}}\right)\\
& \le c^{\frac{q}{p-1}}c_3\left(r + \left(\wa\sigma \right)^{\frac{p-1}{p-1-q}}\right),
\end{aligned}
\end{equation*} 
where $c_3=c_3(n,p,q,\al,C,r)$. Therefore, picking $c$ large enough yields $v \ge r+\wa(v^qd\sigma)$ and $ v \ge w$.

Given a supersolution $v$, and a subsolution $w$ such that $w \le v$, we use a standard iteration argument,  and the Monotone Convergence Theorem, to deduce the existence of a nontrivial solution $u$ to \eqref{continteq} such that  $w \le u\le v$, which yields \eqref{match-est}. By Lemma \ref{wolff-capacity}, we deduce that $u \in L^{s}_{\text{loc}}(\R^n, d\sigma)$ for every $s>0$. This concludes the proof of Theorem \ref{inhomocase}.
\end{proof}

\subsection{Homogeneous problems} 
Having the same hypotheses as in Theorem \ref{inhomocase}, we obtain the following result for the homogeneous equation 
\begin{equation}\label{homoeq}
u=\wa(u^qd\sigma), \quad u \ge 0, 
\end{equation}
where $u \in L^{q}_{\rm loc}(\R^n, d\sigma)$.

\begin{theorem} \label{homocase}
Let $1<p<\infty, 0 < q <p-1$, and $0 < \al < \frac{n}{p}$. Let $\sigma \in M^+(\R^n)$. Suppose that both \eqref{capcond} and \eqref{Wolff-finite} hold. Then there exists a solution $u$ to  \eqref{homoeq} such that 
\begin{equation} \label{twosided-est}
c^{-1} \left(\wa\sigma\right)^{\frac{p-1}{p-1-q}} \le u \le c \, \left(\wa\sigma+(\wa\sigma )^{\frac{p-1}{p-1-q}}\right),
\end{equation}
where $c=c(n,p,q,\alpha,C)$. Moreover, $u \in L^{s}_{\rm loc}(\R^n, d\sigma)$, for every $s>0$.
\end{theorem}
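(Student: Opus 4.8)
The plan is to mirror the proof of Theorem~\ref{inhomocase}, but with $r=0$. First I would set $w = c_0 \left(\wa\sigma\right)^{\frac{p-1}{p-1-q}}$ with $c_0>0$ small, and invoke Lemma~3.2 in \cite{CV1} (as recalled around \eqref{lowerest}) to see that $w$ is a subsolution to \eqref{homoeq}; this gives the lower bound in \eqref{twosided-est}. The substantive part is the construction of a supersolution $v$ matching the \emph{right-hand} side of \eqref{twosided-est}, namely $v = c\left(\wa\sigma + \left(\wa\sigma\right)^{\frac{p-1}{p-1-q}}\right)$ for $c$ large. To verify $v \ge \wa(v^q d\sigma)$, I would estimate $\wa(v^q d\sigma)$ using the elementary inequality $(a+b)^q \le a^q + b^q$ (valid since $0<q<1\le p-1$, so $0<q/(p-1)<1$ after the relevant exponent bookkeeping) to split into a term $\wa\!\left((\wa\sigma)^q d\sigma\right)$ and a term $\wa\!\left((\wa\sigma)^{\frac{(p-1)q}{p-1-q}} d\sigma\right)$.

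For the second term, with $\beta = \frac{(p-1)q}{p-1-q}$, I would reuse verbatim the key estimate established inside the proof of Theorem~\ref{inhomocase}: splitting $\int_{B(x,t)} (\wa\sigma)^\beta d\sigma$ into the contributions of scales $s\le t$ and $s\ge t$, one bounds the large-scale part by $c_1 \sigma(B(x,t)) [\wa\sigma(x)]^\beta$ and the small-scale part, via \eqref{wolffball} with $s=\beta$, by $c_2\,\sigma(B(x,2t))$. Taking the $\frac1{p-1}$ power and integrating in $t$ against $\frac{dt}{t}$ yields
\[
\wa\!\left((\wa\sigma)^{\beta} d\sigma\right) \le c_1\, (\wa\sigma)^{\frac{q}{p-1-q}} \wa\sigma + c_2\, \wa\sigma = c_1\,(\wa\sigma)^{\frac{p-1}{p-1-q}} + c_2\,\wa\sigma,
\]
since $\frac{q}{p-1-q}+1 = \frac{p-1}{p-1-q}$. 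The first term $\wa\!\left((\wa\sigma)^q d\sigma\right)$ is handled the same way with $\beta$ replaced by the smaller exponent $q$, using \eqref{wolffball} with $s=q$ (here is exactly where ``all $s>0$'' in Lemma~\ref{wolff-capacity} is needed, since $q$ may be less than $\frac12$); it is dominated by the same two-term expression. Collecting, $\wa(v^q d\sigma) \le c^{\frac{q}{p-1}} c_3 \left(\wa\sigma + (\wa\sigma)^{\frac{p-1}{p-1-q}}\right)$, and since $0<q<p-1$ we may choose $c$ large enough that $c^{\frac{q}{p-1}} c_3 \le c$, giving $v\ge \wa(v^q d\sigma)$, and also $v\ge w$ after possibly enlarging $c$.

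With a subsolution $w$ and supersolution $v$ satisfying $w\le v$, I would run the standard monotone iteration $u_0 = w$, $u_{j+1} = \wa(u_j^q d\sigma)$: monotonicity of $t\mapsto\wa(t\,d\sigma)$ and $w\le v$ keep $w\le u_j\le u_{j+1}\le v$, the limit $u = \lim u_j$ exists pointwise, and the Monotone Convergence Theorem passes to the limit to give $u = \wa(u^q d\sigma)$ with $w\le u\le v$, i.e.\ \eqref{twosided-est}. Finally, the upper bound $u\le c(\wa\sigma + (\wa\sigma)^{\frac{p-1}{p-1-q}})$ together with \eqref{wolffball} for arbitrary exponents (Lemma~\ref{wolff-capacity}) gives $u\in L^s_{\mathrm{loc}}(\R^n,d\sigma)$ for every $s>0$: locally $\wa\sigma$ is controlled by $\wa\sigma_{Q}$ on a large cube plus a tail, and $\int_Q (\wa\sigma_Q)^s d\sigma<\infty$. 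I expect the only real obstacle to be justifying the use of \eqref{wolffball} for the possibly small exponent $s=q$ and for $s=\beta$ simultaneously — but this is precisely the content of Lemma~\ref{wolff-capacity}, so under hypothesis \eqref{capcond} it is available; the rest is a routine adaptation of the inhomogeneous argument.
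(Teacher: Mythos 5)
Your proposal is correct and follows essentially the same route as the paper: the subsolution $c_0(\wa\sigma)^{\frac{p-1}{p-1-q}}$, the supersolution $c\left(\wa\sigma+(\wa\sigma)^{\frac{p-1}{p-1-q}}\right)$ verified by the same scale-splitting estimate of $\int_{B(x,t)}(\wa\sigma)^{\beta}d\sigma$ used in Theorem \ref{inhomocase} together with \eqref{wolffball} (available for every $s>0$ by Lemma \ref{wolff-capacity}), then monotone iteration with the Monotone Convergence Theorem, and $u\in L^{s}_{\rm loc}(\R^n,d\sigma)$ from \eqref{twosided-est} and Lemma \ref{wolff-capacity}. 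Only two cosmetic points: since $q$ may exceed $1$ (as $q<p-1$ with $p>2$), the splitting inequality should read $(a+b)^q\le 2^{q-1}(a^q+b^q)$, and the large-scale part of the $(\wa\sigma)^{q}$ term produces $(\wa\sigma)^{1+\frac{q}{p-1}}$, which is dominated by $\wa\sigma+(\wa\sigma)^{\frac{p-1}{p-1-q}}$ because $1<1+\frac{q}{p-1}<\frac{p-1}{p-1-q}$ — harmless adjustments that leave your argument intact.
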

\begin{proof}

Let $w= c_0 \left(\wa\sigma\right)^{\frac{p-1}{p-1-q}}$ with a sufficiently small constant $c_0>0$. Then as in the proof of Theorem~\ref{inhomocase},  $w$ is a subsolution to \eqref{homoeq}.
Now, let 
$$v= c_1\left(\wa\sigma + \left(\wa\sigma\right)^{\frac{p-1}{p-1-q}}\right),$$
where $c_1$ is a large constant. Clearly,  $ w \le v$. Arguing as in in the proof of Theorem \ref{inhomocase} and using \eqref{wolffball}, we show that $v$ is a supersolution to \eqref{homoeq}. 
Therefore, as above, we see that there  exists a nontrivial solution $u$ to \eqref{homoeq} such that \eqref{twosided-est} holds.

Given $s>0$, then  $u \in L^{s}_{\text{loc}}(\R^n,d\sigma)$ follows from \eqref{twosided-est} and Lemma \ref{wolff-capacity}. 
We also notice that $\liminf_{|x| \to \infty }u(x)=0$ since $\liminf_{|x| \to \infty } \wa\sigma(x)=0$ by Corollary 3.2 in \cite{CV2}. This completes the proof of Theorem \ref{homocase}. 
\end{proof}

Instead of  the capacity condition \eqref{capcond}, let us use now a weaker  condition 
\begin{equation} \label{new}
 \wa\left(\left( \wa\sigma\right )^{\frac{(p-1)q}{p-1-q}}d\sigma\right) \le \kappa \left( \wa\sigma  + \left( \wa\sigma\right )^{\frac{p-1}{p-1-q}}\right) < \infty \, \text{ a.e.},
\end{equation}
where $\kappa $ is a positive constant. Then we can still construct solutions to \eqref{homoeq} satisfying the Brezis--Kamin type estimates, and show  that \eqref{new} is actually necessary for the existence 
of such solutions. 
\begin{theorem} \label{mainthm}
Let $1<p<\infty, 0 < q <p-1$, and $0 < \al < \frac{n}{p}$. If $\sigma \in M^+(\R^n)$ satisfies  \eqref{new}, then there exists a solution $u$ to \eqref{homoeq} such that \eqref{twosided-est} holds 
with a constant  $c>0$ depending only on $\al, n, p, q,\kappa$. 

Conversely, suppose that there exists a nontrivial supersolution $u$ to \eqref{homoeq} such that \eqref{twosided-est} holds. Then \eqref{new} holds with $\kappa=\kappa(p,q,c)$.
\end{theorem}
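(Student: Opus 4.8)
\textbf{Sufficiency.} The plan is to run the sub/supersolution scheme used in the proofs of Theorems~\ref{inhomocase} and \ref{homocase}, with the pointwise inequality \eqref{new} taking over the role that \eqref{wolffball} (equivalently the capacity condition \eqref{capcond}) played there. Write $\beta=\frac{(p-1)q}{p-1-q}$ and $g=\wa\sigma$; since the right-hand side of \eqref{new} is finite a.e., $g<\infty$ a.e.\ and everything below makes sense. As in Theorem~\ref{homocase}, $w=c_0\,g^{\frac{p-1}{p-1-q}}$ is a subsolution to \eqref{homoeq} once $c_0>0$ is small enough, by Lemma~3.2 of \cite{CV1}; this step needs only $\wa\sigma\not\equiv\infty$. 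For the supersolution I would take $v=c_1\big(g+g^{\frac{p-1}{p-1-q}}\big)$ with $c_1$ large, and estimate $\wa(v^qd\sigma)$ using the homogeneity $\wa(\la\mu)=\la^{\frac{1}{p-1}}\wa\mu$, the elementary inequality $(a+b)^q\le C_q(a^q+b^q)$, and the quasi-additivity $\wa(\mu_1+\mu_2)\le C_p(\wa\mu_1+\wa\mu_2)$, which together give
$$\wa(v^qd\sigma)\le c_1^{\frac{q}{p-1}}\,C\Big(\wa\big(g^qd\sigma\big)+\wa\big(g^{\beta}d\sigma\big)\Big).$$
The second summand is $\le\kappa\big(g+g^{\frac{p-1}{p-1-q}}\big)$ — this is exactly \eqref{new} — and for the first I would use $\beta>q$, hence the pointwise bound $g^q\le 1+g^{\beta}$, so that $\wa(g^qd\sigma)\le C_p\big(\wa\sigma+\wa(g^{\beta}d\sigma)\big)$ is controlled by \eqref{new} as well. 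Thus $\wa(v^qd\sigma)\le c_1^{\frac{q}{p-1}}C_*\big(g+g^{\frac{p-1}{p-1-q}}\big)$ with $C_*=C_*(\al,n,p,q,\kappa)$; since $\frac{q}{p-1}<1$, choosing $c_1\ge\max\{c_0,\,C_*^{(p-1)/(p-1-q)}\}$ makes $v$ a supersolution with $v\ge w$.

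With $w\le v$ in hand I would iterate: $u_0=w$, $u_{k+1}=\wa(u_k^qd\sigma)$. As $q>0$, the map $u\mapsto\wa(u^qd\sigma)$ is monotone, so $w=u_0\le u_1\le\cdots\le v$ follows by induction. Condition \eqref{new} also forces the measure $g^{\beta}d\sigma$ to be locally finite, and since $g^q\le1+g^{\beta}$ we get $v^q\in L^1_{\rm loc}(\R^n,d\sigma)$, so each $u_k\in L^q_{\rm loc}(\R^n,d\sigma)$; letting $u=\lim_k u_k$ and using the Monotone Convergence Theorem inside $\wa$ produces a solution of \eqref{homoeq} with $w\le u\le v$, i.e.\ \eqref{twosided-est} holds with $c$ depending only on $\al,n,p,q,\kappa$, and $u\in L^s_{\rm loc}(\R^n,d\sigma)$ for all $s>0$ by the same local finiteness.

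\textbf{Necessity.} This direction is immediate. Let $u$ be a nontrivial supersolution of \eqref{homoeq} satisfying \eqref{twosided-est}. The lower bound $u\ge c^{-1}g^{\frac{p-1}{p-1-q}}$ gives $u^q\ge c^{-q}g^{\beta}$, so by monotonicity of $\wa$ in the measure together with $u\ge\wa(u^qd\sigma)$,
$$\wa\big(g^{\beta}d\sigma\big)\le c^{\,q}\,\wa(u^qd\sigma)\le c^{\,q}u\le c^{\,q+1}\big(g+g^{\frac{p-1}{p-1-q}}\big),$$
which is \eqref{new} with $\kappa=c^{q+1}$. For the finiteness: since $u\not\equiv0$ and $u\le c\big(g+g^{\frac{p-1}{p-1-q}}\big)$, $\wa\sigma$ is finite on a set of positive measure, hence $\wa\sigma<\infty$ a.e.\ (the tail part of the Wolff potential is finite at one point if and only if at every point, and the local part is locally integrable); therefore the right-hand side of \eqref{new}, and with it $\wa(g^{\beta}d\sigma)$, is finite a.e.

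\textbf{Main obstacle.} The two genuinely nontrivial ingredients — Lemma~3.2 of \cite{CV1} for the subsolution and the lower bound \eqref{lowerest} (Theorem~3.4 of \cite{CV2}) — are quoted. What is left is bookkeeping with the homogeneity and quasi-additivity of $\wa$; the one spot needing a little care is the region $\{g<1\}$, where $g^{\beta}$ is \emph{smaller} than $g^q$, handled by $g^q\le1+g^{\beta}$, and the verification that \eqref{new} gives enough local integrability for the iteration to be meaningful. I do not anticipate a real difficulty beyond this, since the theorem essentially repackages the machinery of Theorems~\ref{inhomocase}--\ref{homocase} with \eqref{new} substituted for \eqref{wolffball}.
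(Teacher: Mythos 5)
Your proposal is correct and follows essentially the same route as the paper: the same subsolution $c_0(\wa\sigma)^{\frac{p-1}{p-1-q}}$, the same supersolution $c_1\bigl(\wa\sigma+(\wa\sigma)^{\frac{p-1}{p-1-q}}\bigr)$ with $\wa\bigl((\wa\sigma)^q d\sigma\bigr)$ absorbed into $\wa\sigma+\wa\bigl((\wa\sigma)^{\beta}d\sigma\bigr)$ (the paper does this by H\"older--Young on ball integrals, which is equivalent to your pointwise bound $g^q\le 1+g^{\beta}$), followed by monotone iteration, and the identical two-line necessity argument. The only blemishes are cosmetic: the constant in the necessity step should be $c^{q/(p-1)}$ rather than $c^{q}$ by the homogeneity of $\wa$, and the claim that $u\in L^s_{\rm loc}(\R^n,d\sigma)$ for every $s>0$ is neither part of this theorem nor justified by \eqref{new} alone (it used the capacity condition via Lemma \ref{wolff-capacity}).
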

\begin{proof}
Suppose that \eqref{new} holds. Let $w=c_0(\wa\sigma)^{\frac{p-1}{p-1-q}}$ with small constant  $c_0$; then $w$ is a subsolution to \eqref{homoeq} as before. Let
$$v =c\left( \wa\sigma  + \left( \wa\sigma\right )^{\frac{p-1}{p-1-q}}\right),$$
where $c>0$ is a large constant. We estimate
\begin{equation*}
\begin{aligned} 
& \wa(v^q d\sigma)  = c^{\frac{q}{p-1}}\wa\left( \left( \wa\sigma  + \left( \wa\sigma\right )^{\frac{p-1}{p-1-q}}\right)^q d\sigma \right)\\
& \le  a\,c^{\frac{q}{p-1}} \wa\left((\wa\sigma)^q d \sigma\right) +  a\,c^{\frac{q}{p-1}} \wa\left((\wa\sigma)^{\frac{(p-1)q}{p-1-q}} d \sigma\right)\\
& \le  a\,c^{\frac{q}{p-1}} \wa\left((\wa\sigma)^q d \sigma\right) +  a\,c^{\frac{q}{p-1}} \kappa \left( \wa\sigma + \left(\wa\sigma\right)^{\frac{p-1}{p-1-q}} \right),
\end{aligned}
\end{equation*} 
where $a=a(p,q)$. Next, we write
$$\wa\left((\wa\sigma)^q d \sigma\right)(x) = \int_0^{\infty} \left( \frac{\int_{B(x,t)}(\wa\sigma)^q d \sigma}{t^{n-\al p}}\right)^{\frac{1}{p-1}}\frac{dt}{t}.$$
Using H\"{o}lder's inequality and Young's inequality, we obtain 
\begin{equation*}
\begin{aligned} 
 \int_{B(x,t)}(\wa\sigma)^q d \sigma & \le \left(\int_{B(x,t)}(\wa\sigma)^{\frac{(p-1)q}{p-1-q}} d \sigma\right)^{\frac{p-1-q}{p-1}} \left[\sigma(B(x,t))\right]^{\frac{q}{p-1}} \\
& \le \tilde{b}\left(\int_{B(x,t)}(\wa\sigma)^{\frac{(p-1)q}{p-1-q}} d \sigma +   \sigma(B(x,t)) \right).
\end{aligned}
\end{equation*} 
Hence, 
\begin{equation*}
\begin{aligned} 
& \int_0^{\infty} \left( \frac{\int_{B(x,t)}(\wa\sigma)^q d \sigma}{t^{n- \al p}}\right)^{\frac{1}{p-1}}\frac{dt}{t}\\ & \le 
b\int_0^{\infty} \left( \frac{\int_{B(x,t)}(\wa\sigma)^{\frac{(p-1)q}{p-1-q}} d \sigma}{t^{n- \al p}}\right)^{\frac{1}{p-1}}\frac{dt}{t}  + b\int_0^{\infty} \left( \frac{\sigma(B(x,t))}{t^{n-\al p}}\right)^{\frac{1}{p-1}}\frac{dt}{t}\\
& = b\wa\left((\wa\sigma)^{\frac{(p-1)q}{p-1-q}} d \sigma\right)(x) + b \wa\sigma(x), 
\end{aligned}
\end{equation*} 
where $b=b(p,q)$. 
By \eqref{newcond}, the last term is bounded by 
$$b\kappa \left( \wa\sigma + \left(\wa\sigma\right)^{\frac{p-1}{p-1-q}} \right)(x) +  b\wa\sigma(x)$$
$$=  b( \kappa+1) \wa\sigma + b\kappa \left(\wa\sigma\right)^{\frac{p-1}{p-1-q}}.$$
It follows, 
\begin{equation*}
\begin{aligned} 
\wa(v^qd\sigma) & \le a\,c^{\frac{q}{p-1}}b( \kappa+1) \wa\sigma + a\,c^{\frac{q}{p-1}}b\kappa \left(\wa\sigma\right)^{\frac{p-1}{p-1-q}}\\
& + a\,c^{\frac{q}{p-1}} \kappa \left( \wa\sigma + \left(\wa\sigma\right)^{\frac{p-1}{p-1-q}} \right)\\ & \le \,c^{\frac{q}{p-1}}a(b\kappa + b + \kappa)( \wa\sigma + \left(\wa\sigma\right)^{\frac{p-1}{p-1-q}}).
\end{aligned}
\end{equation*} 
If $c$ is chosen so that $c\ge\,c^{\frac{q}{p-1}}a(b\kappa + b + \kappa)$ and $c \ge c_0$, then we obtain $v \ge w$ and $v \ge \wa(v^qd\sigma)$. Using  iterations as above, and the Monotone Convergence Theorem, we deduce 
from this the existence of a solution $u$ to the equation $ u = \wa(u^qd\sigma)$ which satisfies \eqref{twosided-est}. 

Conversely, suppose that there exists a nontrivial supersolution $u$ to \eqref{homoeq}  such that \eqref{twosided-est} holds. Then clearly  by the lower bound in \eqref{twosided-est}, 
  $$u \ge \wa(u^q d\sigma) \ge (c^{-1})^{\frac{q}{p-1}} \wa\left((\wa\sigma)^{\frac{(p-1)q}{p-1-q}} d\sigma\right) .$$
Using now 
the upper estimate in \eqref{twosided-est}, we deduce \eqref{new} with $\kappa=\kappa(p,q,c)$.  
This completes the proof of Theorem  \ref{mainthm}.
\end{proof}

\section{Proofs of Theorem \ref{continhomothm}, Theorem \ref{conthomothm}, and Theorem \ref{newthm} } \label{proofthm}

\begin{proof}[Proof of Theorem \ref{continhomothm}]
Let $v \in L^{1+q}_{\text{loc}}(\R^n,d\sigma) $ be a solution to the integral equation  
\begin{equation} \label{wolfintine}
v = K\w(v^qd\sigma )+r,
\end{equation}
where $K$ is the constant in Corollary 4.5 in \cite{PV1} (see also \cite{KM}). 
Then $\liminf_{|x| \to \infty }v(x)=r$, and $v$ satisfies 
\begin{equation} \label{Vestimate1}
c^{-1}\,\left(r + (\w\sigma)^{\frac{p-1}{p-1-q}}\right) \le v \le c \, \left(r+(\w\sigma)^{\frac{p-1}{p-1-q}}\right).
\end{equation}
The existence of such a $v$ follows from Theorem \ref{inhomocase} with some modifications in the constants. We have
$$
\int_B \w (v^q d \sigma_B) \, v^q \, d \sigma \le c \int_B v^{1+q} d \sigma < \infty,
$$ 
for every ball $B$. By a local version of Wolff's inequality (see \cite{AH}, Theorem 4.5.5), we see that $v^q d \sigma \in W^{-1, p'}_{{\rm loc}} (\R^n)$.

We set $u_0=r$ and $B_k=B(0,2^k)$, where $k=0,1,2 \ldots $. We have $ u^q_0 d\sigma \in W^{-1,p'}(B_k)$ since $u_0 \le v$ and $v^q d \sigma \in W^{-1, p'}_{{\rm loc}} (\R^n)$. Hence, there exists a unique $p$-superharmonic solution $u^k_1$ to the equation 
\begin{equation}\label{u1k-seq}
-\Delta_pu^k_1=\sigma u^q_0 \text{ in } B_k \, , \quad u_1^k \ge r, \, u_1^k-r \in W_0^{1,p}(B_k).
\end{equation}
(See, e.g., Theorem 21.6 in \cite{HKM}.) By Corollary 4.5 in \cite{PV1}, we have 
$$u^{k}_1 -r \le K \w(u^q_0d\sigma).$$
Since $u_0 \le v$, we get
$$u^{k}_1 \le K \w(v^qd\sigma)+r = v.$$
We see that the sequence $\{u_1^k\}_k$ is increasing by a comparison principle (Lemma 5.1 in \cite{CV2}). Letting $u_1= \lim_{ k \to \infty}u_1^k$ and using the weak continuity of the $p$-Laplacian (\cite{TW1}) and the Monotone Convergence Theorem, we deduce that $u_1$ is a $p$-superharmonic solution to the equation
$$-\Delta_pu_1=\sigma u^q_0 \text{ in } \R^n.$$
Moreover, $r \le u_1 \le v$ since $u_1^k \le v$ and hence $\liminf_{|x| \to \infty }u_1(x)=r$. Clearly, we also have $u_0 \le u_1$.
We notice that $u^q_0 d\sigma \in  W^{-1, p'}_{{\rm loc}} (\R^n)$ since $u_0 \le v$ and $ v^q d\sigma \in  W^{-1, p'}_{{\rm loc}} (\R^n)$. Therefore, applying Lemma 3.3 in \cite{CV2}, we conclude that $u_1 \in W_{\text{loc}}^{1,p}(\R^n)$.

Let us now construct by induction a sequence $\{u_j\}_j$ of $p$-superharmonic functions in $\R^n$, $u_j \in L^q_{\text{loc}}(\R^n,d\sigma)$, so that  
\begin{equation}\label{entire1bvapprox}
\left\{ \begin{array}{ll}
-\Delta_pu_{j}=\sigma u_{j-1}^q \text{ in ~~} \mathbb{R}^n, \quad j=2,3,\ldots ,\\
r\le u_{j} \le v,\, u_j \in W_{\text{loc}}^{1,p}(\R^n), \\
u_{j-1} \le u_j  , \\
\liminf_{|x| \to \infty }u_j(x)=r.
\end{array} \right.
\end{equation}
Suppose that $u_1, \ldots , u_{j-1}$ have been constructed. 
We see that $\sigma u^q_{j-1} \in W^{-1,p'}(B_k)$ since $u_{j-1} \le v$ and $v^q d \sigma \in W^{-1, p'}_{{\rm loc}} (\R^n)$. Thus, as before, there exists a unique $p$-superharmonic solution $u^k_j$ to the equation 
\begin{equation}\label{uk}
-\Delta_pu^k_j=\sigma u^q_{j-1} \text{ in } B_k, \quad u_j^k \ge r, \, u_j^k-r \in W_0^{1,p}(B_k).
\end{equation}
Arguing by induction, let $u_{j-1}^k$ be the unique solution of the equation
\begin{equation}\label{uk1}
-\Delta_pu^k_{j-1}=\sigma u^q_{j-2} \text{ in } B_k, \quad u_{j-1}^k \ge r, \, u_{j-1}^k-r \in W_0^{1,p}(B_k).
\end{equation}
Since $u_{j-2} \le u_{j-1} $, by the comparison principle we deduce that
\begin{equation} \label{wcp1}
u^k_{j-1} \le u^k_j, \quad \text{for all} \,\,  k \ge 1.
\end{equation}
Using Corollary 4.5 in \cite{PV1}, we have 
$$u^{k}_j -r \le K \w(u^q_{j-1}d\sigma).$$
Since $u_{j-1} \le v$, we obtain
$$u^{k}_j \le K \w(v^qd\sigma)+r = v.$$

Applying again the comparison principle, we see that the sequence $\{u_j^k\}_k$ is increasing. Hence, letting $u_j= \lim_{ k \to \infty}u_j^k$ and using the weak continuity (\cite{TW1})  and the Monotone Convergence Theorem, we see that $u_j$ is a $p$-superharmonic solution to the equation
$$-\Delta_pu_j=\sigma u^q_{j-1} \text{ in } \R^n.$$
Moreover, $r \le u_j \le v$ since $u_j^k \le v$ and hence $\liminf_{|x| \to \infty }u_j(x)=r$. We also have $u_{j-1} \le u_j$ since $u^k_{j-1} \le u^k_j$. We see that $u^q_{j-1}d\sigma \in  W^{-1, p'}_{{\rm loc}} (\R^n)$ since $u_{j-1} \le v$ and $ v^q d\sigma \in  W^{-1, p'}_{{\rm loc}} (\R^n)$. Therefore, by Lemma 3.3 in \cite{CV2}, it follows that $u_j \in W_{\text{loc}}^{1,p}(\R^n)$. By the weak continuity (\cite{TW1})  and the Monotone Convergence Theorem, we deduce that $u$ is a solution to the equation 
$$-\Delta_pu=\sigma u^q \text{ in } \R^n.$$
Furthermore, $r \le u \le v,$ and hence $\liminf_{ |x| \to \infty}u(x)=r $. By \eqref{Vestimate1}, we get
$$u \le c\Bigl( r +\w{\sigma}\Bigr)^{\frac{p-1}{p-1-q}}.$$ 
Using Lemma 3.3 in \cite{CV2} again, we conclude that $u \in W_{\text{loc}}^{1,p}(\R^n)$ since $ u^q d\sigma \in  W^{-1, p'}_{{\rm loc}} (\R^n)$. The lower estimate follows from \eqref{lowerest} and the fact that $u \ge r$. This completes the proof of  Theorem \ref{continhomothm}.  
\end{proof}

\begin{proof}[Proof of Theorem \ref{conthomothm}]
Suppose that \eqref{Wolfinite} and \eqref{capacitycond} hold. Then by Theorem \ref{homocase} there exists a nontrivial solution  $v \in L^{1+q}_{\text{loc}}(\R^n,d\sigma)$ to the equation
\begin{equation}\label{V2}
v = K \w(v^qd\sigma), \quad \liminf_{|x| \to \infty }v(x)=0. 
\end{equation}
Moreover, there exists a constant $c=c(n,p,q,C(\sigma))>0$ such that
\begin{equation} \label{Vupper}
v \le c\left(\w\sigma+\left(\w\sigma\right)^{\frac{p-1}{p-1-q}}\right).
\end{equation}
Arguing as in the proof of Theorem 1.1 in \cite{CV2}, we deduce the existence of a minimal $p$-superharmonic solution $u$ to the equation 
$$-\Delta_pu=\sigma u^q \text{ in } \R^n.$$

Since $u \le v$, we see that $u \in L_{\text{loc}}^{1+q}(\R^n, d\sigma)$. Hence,
$$
\int_B \w (u^q d \sigma_B) \, u^q \, d \sigma \le c\, \int_B u^{1+q} d \sigma < \infty,
$$ 
for every ball $B$. By the local  Wolff's inequality, it follows  that $u^q d \sigma \in W^{-1, p'}_{{\rm loc}} (\R^n)$. 
Hence $u \in W^{1, p}_{{\rm loc}} (\R^n)$ by Lemma 3.3 in \cite{CV2}. 

Moreover, by \eqref{lowerest}, $ u \ge  c\, \left(\w\sigma\right)^{\frac{p-1}{p-1-q}}$, and consequently, 
$$c^{-1}\left(\w\sigma\right)^{\frac{p-1}{p-1-q}} \le u \le  c\left(\w\sigma+\left(\w\sigma\right)^{\frac{p-1}{p-1-q}}\right).$$
The case $p\ge n$ follows from Theorem 2.4 (ii) in \cite{CV2}. This completes the proof of  Theorem \ref{conthomothm}.
\end{proof}

\begin{proof}[Proof of Theorem \ref{newthm}] Suppose there exists a $p$-superharmonic solution $u$ to \eqref{eq1} with $r=0$, and $u$ satisfies \eqref{two-sided}. By Corollary 4.5 in \cite{PV1}, $u \ge \frac{1}{K} \w(u^q d\sigma)$. Consequently, by \eqref{lowerest}, $u \ge C (\w\sigma)^{\frac{p-1}{p-1-q}}$; and hence, $$u \ge c \w((\w\sigma)^{\frac{(p-1)q}{p-1-q}}d\sigma).$$
Therefore,
$$\w((\w\sigma)^{\frac{(p-1)q}{p-1-q}}d\sigma) \le   c\left(\w\sigma+\left(\w\sigma\right)^{\frac{p-1}{p-1-q}}\right) < \infty \, \text{  a.e.}$$

Conversely, suppose that \eqref{newcond} holds, then applying Theorem \ref{mainthm} and arguing as in the proof of Theorem 1.1 in \cite{CV2}, we conclude the proof of  Theorem \ref{newthm}.
\end{proof}

\begin{rmk}
{\rm Since our approach is based on  the Wolff potential estimates (see \cite{KM}, \cite{KuMi}, \cite{Lab}, \cite{TW1}, \cite{PV1}), 
all of the results metioned above remain 
valid  if one replaces the $p$-Laplacian $\Delta_p$ in the model problem \eqref{eq1} by a more general quasilinear operator $ \textrm{div} \mathcal{A}(x,\nabla \cdot)$, under standard structural assumptions on $\mathcal{A}(x,\xi)$ which ensure that $\mathcal{A}(x,\xi) \cdot \xi \approx  |\xi|^p$, or a fully nonlinear operator of $k$-Hessian type (see details in \cite{CV2})}.
\end{rmk}

\section{The radial case } \label{radial}

In this section we will assume that $\sigma\in M^+(\R^n)$ is radially symmetric. 
Suppose $0<q<1$ and $0<2 \alpha< n$. We study equation \eqref{frac} with $r=0$, i.e.,
\begin{equation}\label{subeq}
\left\{ \begin{array}{ll}
(-\Delta)^{\al} u = \sigma u^q \quad \text{ in } \R^n,\\
\liminf_{ |x| \to \infty}u(x)=0.
\end{array} \right.
\end{equation}
We notice that a necessary condition for the existence of a nontrivial solution to \eqref{subeq} is that $\sigma$ must be absolutely continuous with respect to the $(\al,2)$-capacity $\text{cap}_{\al,2}(\cdot)$ \cite{CV2}. In particular,  $\sigma$ has no atoms. Suppose that $\sigma$ is radial and $\rat\sigma \not \equiv + \infty$, then we have (\cite{Rub}, p. 231)
$$\rat\sigma(x)=c\,\int_{\R^n} \int_{\max\{|x|,|y|\}}^{\infty}(|t|^2-|x|^2)^{\al-1}(|t|^2-|y|^2)^{\al-1}\frac{dt}{t^{2\al+n-3}}\,d\sigma(y)  $$
$$\ap  \frac{\sigma(B(0,|x|)}{|x|^{n-2\al}} + \int_{|y|\ge|x|} \frac{d\sigma(y)}{|y|^{n-2\al} } ,$$
where we drop the first term if $x=0$. We have the following theorem.
\begin{theorem} \label{radpro} Let $0<q<1, n \ge 1$, and $0<\al<\frac{n}{2}$.   Let $\sigma \in M^+(\R^n)$ be radially symmetric. Then there exists a nontrivial solution $u$ to \eqref{subeq} if and only if \eqref{radialcond} holds. Moreover, $u$ satisfies \eqref{radlow}.
\end{theorem}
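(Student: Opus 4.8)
The plan is to pass to the equivalent integral equation $u=\rat(u^qd\sigma)$ with $u>0$ and $\liminf_{|x|\to\infty}u(x)=0$ (this equivalence, and the fact that for radially symmetric $\sigma$ one may take $u$ radial, is standard and available as in \cite{CV2}), and then to exploit the radial representation $\rat\nu(x)\ap\frac{\nu(B(0,|x|))}{|x|^{n-2\al}}+\int_{|y|\ge|x|}\frac{d\nu(y)}{|y|^{n-2\al}}$ for radial $\nu\in\M$ recorded above. Abbreviate $h(t)=\int_{|y|<t}\frac{d\sigma(y)}{|y|^{(n-2\al)q}}$ and $B(t)=\int_{|y|\ge t}\frac{d\sigma(y)}{|y|^{n-2\al}}$, so that \eqref{radialcond} is exactly ``$h(1)<\infty$ and $B(1)<\infty$'' and \eqref{radlow} says $u\ap W$, where $W(x):=\frac{h(|x|)^{\frac{1}{1-q}}}{|x|^{n-2\al}}+B(|x|)^{\frac{1}{1-q}}$. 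The two weights $t^{-(n-2\al)q}$ and $t^{-(n-2\al)}$ applied to the radial distribution of $\sigma$ are precisely $dh(t)$ and $-dB(t)$, which gives the elementary identities $\int_0^rh^{\frac{q}{1-q}}\,dh=(1-q)h(r)^{\frac{1}{1-q}}$ and $\int_r^\infty B^{\frac{q}{1-q}}(-dB)=(1-q)B(r)^{\frac{1}{1-q}}$; the same computations show $W\in L^q_{\rm loc}(\R^n,d\sigma)$ under \eqref{radialcond}.

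For necessity, suppose a nontrivial radial solution $u$ exists. If $B(1)=\infty$ then $B(|x|)=\infty$ for $|x|\le1$, so by the radial formula $\rat\sigma\equiv\infty$ on $B(0,1)$, hence by \eqref{lowest} $u\equiv\infty$ there — a contradiction; thus $B(1)<\infty$. Next, since $\rat\nu(x)\ge c_0|x|^{-(n-2\al)}\nu(B(0,|x|))$ for a dimensional $c_0>0$, the quantity $\psi(t):=\int_{|y|<t}u^q\,d\sigma$ must be finite for every $t$ (otherwise $u\equiv\infty$) and $u(t)\ge c_0t^{-(n-2\al)}\psi(t)$. Therefore $d\psi(t)=u(t)^q\,d\sigma_{\rm rad}(t)\ge c_0^q\,\psi(t)^q\,t^{-(n-2\al)q}\,d\sigma_{\rm rad}(t)=c_0^q\,\psi(t)^q\,dh(t)$; integrating $\psi^{-q}\,d\psi\ge c_0^q\,dh$ over $(0,r)$ gives $h(r)\le\frac{\psi(r)^{1-q}}{c_0^q(1-q)}<\infty$, which is the remaining part of \eqref{radialcond}.

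For sufficiency, assuming \eqref{radialcond}, I would show $W\ap\rat(W^qd\sigma)$, so that $c_1W$ is a subsolution and $c_2W$ a supersolution of the integral equation for suitable $0<c_1\le c_2$; the usual monotone iteration between them together with the Monotone Convergence Theorem then yields a radial solution $u$ with $c_1W\le u\le c_2W$, i.e. \eqref{radlow}, and since $W(x)\to0$ as $|x|\to\infty$ (a consequence of $B(1)<\infty$) one gets $\liminf_{|x|\to\infty}u(x)=0$; finally $W\in L^q_{\rm loc}(d\sigma)$ makes $u^qd\sigma$ an admissible right‑hand side, so $u$ solves \eqref{subeq} in the sense of \cite{CV2}. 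Evaluating $\rat(W^qd\sigma)$ by the radial formula and splitting $W^q$ into its two summands produces four terms; two of them (corresponding to the two summands matched with the ``own'' parts of the radial kernel) are exact Stieltjes integrals and, by the identities above, equal $\frac{h(|x|)^{\frac{1}{1-q}}}{|x|^{n-2\al}}$ and $B(|x|)^{\frac{1}{1-q}}$ up to the factor $1-q$. Since $W$ dominates each summand, this already gives the lower bound $\rat(W^qd\sigma)\ge\frac{1-q}{2}W$, so $c_1W$ is a subsolution once $c_1$ is small.

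The hard part will be the matching upper bound $\rat(W^qd\sigma)\le C\,W$, i.e. estimating the two ``cross'' terms $\frac{1}{|x|^{n-2\al}}\int_{|y|<|x|}B(|y|)^{\frac{q}{1-q}}d\sigma(y)$ and $\int_{|y|\ge|x|}\frac{h(|y|)^{\frac{q}{1-q}}}{|y|^{(n-2\al)q}}\frac{d\sigma(y)}{|y|^{n-2\al}}$ by $W(|x|)$. Here I would split, for $t<|x|$, $B(t)=(B(t)-B(|x|))+B(|x|)$ using $B(t)-B(|x|)\le t^{-(n-2\al)(1-q)}h(|x|)$, and dually, for $t\ge|x|$, $h(t)=(h(t)-h(|x|))+h(|x|)$ using $h(t)-h(|x|)\le t^{(n-2\al)(1-q)}B(|x|)$. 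After these splits the ``truncated'' pieces are bounded by $\frac{h(|x|)^{\frac{1}{1-q}}}{|x|^{n-2\al}}$ and by $B(|x|)^{\frac{1}{1-q}}$ respectively (using $\int_{|y|<|x|}|y|^{-(n-2\al)q}d\sigma=h(|x|)$ and $\int_{|y|\ge|x|}|y|^{-(n-2\al)}d\sigma=B(|x|)$), while the remainder pieces, with the factor $B(|x|)$ or $h(|x|)$ pulled out, are of the form $X^qY$ with $X\in\{\,\tfrac{h(|x|)^{1/(1-q)}}{|x|^{n-2\al}},\,B(|x|)^{1/(1-q)}\,\}$ and are handled via $\sigma(B(0,|x|))\le|x|^{(n-2\al)q}h(|x|)$ and Young's inequality $X^qY\le qX+(1-q)Y^{\frac{1}{1-q}}$, again bounded by $\frac{h(|x|)^{\frac{1}{1-q}}}{|x|^{n-2\al}}+B(|x|)^{\frac{1}{1-q}}=W(|x|)$. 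Taking $c_2$ large then makes $c_2W$ a supersolution. I expect the only real difficulty to lie in this last estimate — keeping all constants uniform in $|x|$ and carrying out these two Hardy‑type inequalities for $\sigma$; the reductions to the integral equation, the passage to radial solutions, and the equivalence with \eqref{subeq} are soft and follow from \cite{CV2} and the iteration machinery of Section~\ref{solnintegral}.
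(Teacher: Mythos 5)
Your plan is, for the sufficiency half (which you correctly identify as the main difficulty), essentially the paper's own proof: the paper also takes $v=c\,W$ with $W(x)=|x|^{-(n-2\al)}h(|x|)^{\frac{1}{1-q}}+B(|x|)^{\frac{1}{1-q}}$ as the supersolution candidate, splits $\rat(v^qd\sigma)$ into the same four terms via the radial formula, and estimates the two cross terms by exactly your substitutions $|z|^{-(n-2\al)}\le |z|^{-(n-2\al)q}|y|^{-(n-2\al)(1-q)}$ (and its dual) together with Young's inequality, before running the same monotone iteration with the Monotone Convergence Theorem. You genuinely diverge in two places. First, for the necessity of $h(1)<\infty$ the paper invokes Lemma 4.2 of \cite{CV2} with the test measure $\nu=\delta_0$, which yields $h(|x|)^{\frac{1}{1-q}}\le c\int_{B(0,|x|)}u^q\,d\sigma$; your Gronwall-type comparison $d\psi\ge c_0^q\,\psi^q\,dh$ reaches the same inequality by an elementary, self-contained computation, which is a nice simplification. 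Second, you use $c_1W$ as the subsolution, whereas the paper uses $c_0(\rat\sigma)^{\frac{1}{1-q}}$ (always a subsolution, by Lemma 3.2 of \cite{CV1}) and obtains the lower half of \eqref{radlow} a posteriori from \eqref{lowest} together with the necessity estimate; your variant, when it works, gives both bounds at once for the constructed solution.

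One caveat you should address: a radial $\sigma\in\M$ may charge spheres (this is compatible with absolute continuity with respect to ${\rm cap}_{\al,2}$ when $2\al>1$), so $h$ and $B$ may have jumps, and then your two ``exact Stieltjes identities'' and the chain-rule step ``integrating $\psi^{-q}d\psi\ge c_0^q\,dh$'' are not literally valid: with left-endpoint (open-ball) values, $\int_{(0,r)} h^{\frac{q}{1-q}}\,dh$ can be far below $(1-q)h(r)^{\frac{1}{1-q}}$ (even zero if the mass near the origin sits on a single sphere), and $\int\psi(t-)^{-q}\,d\psi$ can exceed $\psi(r)^{1-q}/(1-q)$, so neither the claimed lower bound $\rat(W^qd\sigma)\ge\frac{1-q}{2}W$ nor the stated Gronwall conclusion follows verbatim. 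Both are easily repaired: the radial kernel controls the \emph{closed} ball, i.e. $u(y)\ge c|y|^{-(n-2\al)}\int_{|z|\le|y|}u^q\,d\sigma$, so the Gronwall step should be run with the right-continuous distribution function, for which the concavity inequality goes the right way; and for the subsolution either use both summands of $W$ across a jump (note $B(|y|)\ge|y|^{-(n-2\al)}\sigma(\{|z|=|y|\})$, which restores the lower bound), or simply fall back on $c_0(\rat\sigma)^{\frac{1}{1-q}}$ as the paper does and recover the lower estimate in \eqref{radlow} from \eqref{lowest} and your own necessity computation applied to the constructed solution. With these adjustments your argument goes through.
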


\begin{proof}[Proof of Theorem \ref{radpro}]
We remark that \eqref{subeq} is understood in the sense that $u=\rat(u^qd\sigma)$. Suppose that $u$ is a solution to \eqref{subeq}. We first notice that $\rat\sigma$ is radial. Therefore, the minimal solution to \eqref{subeq} constructed in Theorem 4.8 in \cite{CV2} is radial as well. Hence, we may assume that $u$ is radial. By \eqref{lowerest},
$$ u(x) \ge c \, \left(\rat\sigma(x)\right)^{\frac{1}{1-q}}, \quad x \in \R^n,$$
where $c=c(n,q)>0$. Consequently,
$$ u(x) \ge c \, \left(\int_{|y|\ge|x|} \frac{d\sigma(y)}{|y|^{n-2\al}}\right)^{\frac{1}{1-q}}, \quad x \in \R^n.$$
We have
\begin{equation} \label{radeq1}
u(x) = \rat(u^qd\sigma)(x) \approx \frac{\int_{|y| < |x|}u^qd\sigma}{|x|^{n-2\al}} + \int_{|y|\ge|x|} \frac{u^qd\sigma(y)}{|y|^{n-2\al} }. 
\end{equation}
By Lemma 4.2 in \cite{CV2}, we have,  for all $\nu \in M^+(\R^n)$,  
$$ ||\rat\nu ||_{L^q(d\sigma_{B(0,|x|)})}  \le c\,\left(\int_{B(0,|x|)}u^qd\sigma\right)^{1-q} \, \nu(\R^n).$$ 
Let $\nu=\delta_0$, we get
$$\left(\int_{B(0,|x|)}\frac{d\sigma(y)}{|y|^{(n-2\al)q}}\right)^{\frac{1}{1-q}} \le c\,\int_{B(0,|x|)}u^qd\sigma.$$
Therefore, we deduce from \eqref{radeq1} 
$$u(x) \ge c\,\frac{1}{|x|^{n-2\al}}\left(\int_{B(0,|x|)}\frac{d\sigma(y)}{|y|^{(n-2\al)q}}\right)^{\frac{1}{1-q}}.$$
 Thus,  \eqref{radialcond} follows since $u\not \equiv + \infty$.

Conversely, suppose that condition \eqref{radialcond} holds. This implies 
$$\int_{B(0,|x|)} \frac{d\sigma(y)}{|y|^{(n-2\al)q}} < \infty \text { and } \int_{|y|\ge |x|} \frac{d\sigma(y)}{|y|^{n-2\al}}  < \infty,\quad x \neq 0.$$ Let 
$u_0 = c_0\,\left(\rat\sigma \right)^{\frac{1}{1-q}}$ with a small constant $c_0$. Then $u_0 \le \rat(u_0^qd\sigma)$ as before. 
$$\text{Let } v(x)= c \, \left (\frac{1}{
|x|^{n-2\al}}\left(\int_{|y|<|x|} \frac{d\sigma(y)}{|y|^{(n-2\al)q}}\right)^{\frac{1}{1-q}} + \left(\int_{|y|\ge|x|} \frac{d\sigma(y)}{|y|^{n-2\al}}\right)^{\frac{1}{1-q}} \right ).$$
We see that $v \ge \rat(v^qd\sigma)$. Indeed, for $x \neq 0$,  we have 
\begin{equation*}
\begin{aligned} 
\rat(v^qd\sigma)(x) & \ap \frac{1}{|x|^{n-2\al}}\int_{|y|<|x|}v^qd\sigma+\,\int_{|y| \ge|x|}\frac{v^qd\sigma}{|y|^{n-2\al}}\\
& \le c^q\,\frac{1}{|x|^{n-2\al}}\int_{|y|<|x|}\frac{1}{|y|^{(n-2\al)q}}\left(\int_{|z|<|y|} \frac{d\sigma(z)}{|z|^{(n-2\al)q}}\right)^{\frac{q}{1-q}}d\sigma(y)\\
&+ c^q\,\frac{1}{|x|^{n-2\al}}\int_{|y|<|x|}\left(\int_{|z|\ge |y|} \frac{d\sigma(z)}{|z|^{n-2\al}}\right)^{\frac{q}{1-q}}d\sigma(y)\\
&+c^q\,\int_{|y| \ge|x|}\frac{1}{|y|^{n-2\al}}\frac{1}{|y|^{(n-2\al)q}}\left(\int_{|z|<|y|} \frac{d\sigma(z)}{|z|^{(n-2\al)q}}\right)^{\frac{q}{1-q}}d\sigma(y)\\
&+c^q\,\int_{|y| \ge|x|}\frac{1}{|y|^{n-2\al}}\left(\int_{|z|\ge |y|} \frac{d\sigma(z)}{|z|^{n-2\al}}\right)^{\frac{q}{1-q}}d\sigma(y)\\&:=c^q(I+II+III+IV).
\end{aligned}
\end{equation*} 
Clearly,
\begin{equation*} 
I\le  \frac{1}{|x|^{n-2\al}}\left(\int_{|y|<|x|} 
\frac{d\sigma(y)}{
|y|^{(n-2\al)q}}\right)^{\frac{1}{1-q}}.
\end{equation*}
We next estimate
\begin{eqnarray*}
II & = & \frac{1}{|x|^{n-2\al}}\int_{|y|<|x|} \left(\int_{|y|\le|z|<|x|} \frac{d\sigma(z)}{|z|^{n-2\al}}+\int_{|z|\ge|x|} \frac{d\sigma(z)}{|z|^{n-2\al}}\right)^{\frac{q}{1-q}}d\sigma(y)\\
& \le & c_1\frac{1}{|x|^{n-2\al}}\int_{|y|<|x|} \left(\int_{|y|\le|z|<|x|} \frac{d\sigma(z)}{|z|^{n-2\al}}\right)^{\frac{q}{1-q}}d\sigma(y)\\ 
& + & c_1\,\frac{1}{|x|^{n-2\al}}\int_{|y|<|x|} d\sigma(y)\left(\int_{|z|\ge|x|} \frac{d\sigma(z)}{|z|^{n-2\al}}\right)^{\frac{q}{1-q}} \\ 
&= & c_1(II_a+II_b).
\end{eqnarray*}
Next,
\begin{equation*}
\begin{aligned} 
II_a & =\frac{1}{|x|^{n-2\al}}\int_{|y|<|x|} \left(\int_{|y|\le|z|<|x|} \frac{d\sigma(z)}{|z|^{(n-2\al)q}|z|^{(n-2\al)(1-q)}}\right)^{\frac{q}{1-q}}d\sigma(y)\\
& \le \frac{1}{|x|^{n-2\al}}\int_{|y|<|x|} \left(\int_{|y|\le|z|<|x|} \frac{d\sigma(z)}{|z|^{(n-2\al)q}}\right)^{\frac{q}{1-q}}\frac{1}{|y|^{(n-2\al)q}} d\sigma(y)\\
& \le \frac{1}{|x|^{n-2\al}} \left(\int_{|y|<|x|} \frac{d\sigma(y)}{|y|^{(n-2\al)q}}\right)^{\frac{1}{1-q}}.
\end{aligned}
\end{equation*} 
Using Young's inequality with exponents $\frac{1}{1-q}$ and $\frac{1}{q}$, we obtain
$$II_b \le c_2\, \left(\left(\frac{1}{|x|^{n-2\al}}\int_{|y|<|x|} d\sigma(y)\right)^{\frac{1}{1-q}}+ \left(\int_{|z|\ge|x|} \frac{d\sigma(z)}{|z|^{n-2\al}}\right)^{\frac{1}{1-q}} \right).$$
We next estimate  
\begin{equation*}
\begin{aligned} 
&III \le c_1\,\int_{|y|\ge|x|} \frac{1}{|y|^{n-2\al}|y|^{(n-2\al)q}}\left(\int_{|z|<|x|}\frac{d\sigma(z)}{|z|^{(n-2\al)q}}\right)^{\frac{q}{1-q}}d\sigma(y)\\
&+c_1\,\int_{|y|\ge|x|} \frac{1}{|y|^{n-2\al}|y|^{(n-2\al)q}}\left(\int_{|x|\le |z|<|y|}\frac{d\sigma(z)}{|z|^{(n-2\al)q}}\right)^{\frac{q}{1-q}}d\sigma(y)\\
&\le c_1\,\frac{1}{|x|^{(n-2\al)q}}\left(\int_{|z|<|x|}\frac{d\sigma(z)}{|z|^{(n-2\al)q}}\right)^{\frac{q}{1-q}}\int_{|y|\ge|x|} \frac{d\sigma(y)}{|y|^{n-2\al}}\\
&+c_1\,\int_{|y|\ge|x|} \frac{1}{|y|^{n-2\al}|y|^{(n-2\al)q}}\left(\int_{|x|\le |z|<|y|}\frac{|z|^{(n-2\al)(1-q)}d\sigma(z)}{|z|^{n-2\al}}\right)^{\frac{q}{1-q}}
d\sigma(y)\\
&\le c_1\,\frac{1}{|x|^{(n-2\al)q}}\left(\int_{|z|<|x|}\frac{d\sigma(z)}{|z|^{(n-2\al)q}}\right)^{\frac{q}{1-q}}\int_{|y|\ge|x|} \frac{d\sigma(y)}{|y|^{n-2\al}}\\
&+c_1\,\left(\int_{|x|\le |z|}\frac{d\sigma(z)}{|z|^{n-2\al}}\right)^{\frac{q}{1-q}}\int_{|y|\ge|x|} \frac{d\sigma(y)}{|y|^{(n-2\al)}}.
\end{aligned}
\end{equation*} 
Using Young's inequality again, we arrive at 
\begin{equation*}
\begin{aligned} 
III & \le  c_1c_2\,\frac{1}{|x|^{n-2\al}}\left(\int_{|z|<|x|}\frac{d\sigma(z)}{|z|^{(n-2\al)q}}\right)^{\frac{1}{1-q}}\\
&+(c_1c_2+c_1) \left(\int_{|y|\ge |x|}\frac{d\sigma(y)}{|y|^{n-2\al}}\right)^{\frac{1}{1-q}}.
\end{aligned}
\end{equation*} 
Clearly, $$IV \le \left(\int_{|y|\ge |x|}\frac{d\sigma(y)}{|y|^{n-2\al}}\right)^{\frac{1}{1-q}}.$$
Clearly, $v(0)\ge \rat(v^qd\sigma)(0)$, if  $c$ is chosen large enough. Therefore, we obtain  $\rat(v^qd\sigma) \le v$. Using iterations  and the Monotone Convergence Theorem as above, we deduce the existence of a radial solution $u$ to the equation
$u=\rat(u^qd\sigma)$. Moreover, 
$$u(x) \le  c\,\left(\frac{1}{|x|^{n-2\al}}\left(\int_{|y|<|x|}\frac{d\sigma(y)}{|y|^{(n-2\al)q}}\right)^{\frac{1}{1-q}}
+\left(\int_{|y|\ge |x|}\frac{d\sigma(y)}{|y|^{n-2\al}}\right)^{\frac{1}{1-q}}\right),
$$
which completes the proof of  Theorem \ref{radpro}.
\end{proof}

We now characterize condition \eqref{newcond0} when $\sigma$ is radial and $\rat\sigma \not \equiv \infty$. Then for every $a>0$,
 $$\rat(x) \le c(a,\sigma), \quad  \text{ for } \,|x| \ge a.$$ 
Indeed, 
\begin{equation*}
\begin{aligned} 
\rat\sigma(x) & \ap  \frac{\sigma(B(0,|x|)}{|x|^{n-2\al}} + \int_{|y|\ge|x|} \frac{d\sigma(y)}{|y|^{n-2\al} }\\
& =\frac{\int_{|y|<a} d\sigma(y)}{|x|^{n-2\al}} +\frac{\int_{a\le|y|<|x|} d\sigma(y)}{|x|^{n-2\al}}+ \int_{|y| \ge |x|} \frac{d\sigma(y)}{|y|^{n-2\al} } \\
& \le \frac{\int_{|y|<a} d\sigma(y)}{|a|^{n-2\al}} +\int_{a\le|y|<|x|}\frac{ d\sigma(y)}{|y|^{n-2\al}}+ \int_{|y| \ge a} \frac{d\sigma(y)}{|y|^{n-2\al} } \\ 
&\le \frac{\int_{|y|<a} d\sigma(y)}{|a|^{n-2\al}} + 2\int_{|y| \ge a} \frac{d\sigma(y)}{|y|^{n-2\al} } .
\end{aligned}
\end{equation*} 

We observe that  $\lim_{|x| \to \infty} \rat\sigma(x) = 0$. If $\limsup_{|x| \to 0} \rat\sigma(x) < +\infty$, then by the above observation, we have $\rat\sigma \in L^{\infty}(\R^n)$. This implies that  condition \eqref{newcond0} holds. Therefore, we need to focus on the case where $\limsup_{|x| \to 0} \rat\sigma(x) = + \infty$. 
Let us set  
$$ \mathbf{K}\sigma(x) = \frac{1}{|x|^{n-2\al}}\left(\int_{|y|<|x|}\frac{d\sigma(y)}{|y|^{(n-2\al)q}}\right)^{\frac{1}{1-q}}, \quad x \neq 0.$$
Suppose that \eqref{newcond0} holds. Then by Theorem \ref{conthomothm}, there exists a solution $u$ to \eqref{subeq} such that $u \le c ( \rat\sigma + (\rat\sigma)^{\frac{1}{1-q}})$. On the other hand,   $ u \ge c \ \mathbf{K}\sigma$ by Theorem \ref{radpro}.
Therefore, if $\rat\sigma \not \equiv \infty$, then condition \eqref{newcond0} implies 
\begin{equation} \label{Kcond}
\mathbf{K}\sigma \le c\, \left( \rat\sigma + (\rat\sigma)^{\frac{1}{1-q}}\right) < \infty \, \text{ a.e}.
\end{equation}

Conversely, suppose that \eqref{Kcond} holds. Then by Theorem \ref{radpro} there exists a solution $u$ to \eqref{subeq} such that $u \le c \left(\mathbf{K}\sigma + (\rat\sigma)^{\frac{1}{1-q}}\right)$. Hence, 
$u \le c\, \left( \rat\sigma + (\rat\sigma)^{\frac{1}{1-q}}\right)$, and using \eqref{lowest} yields \eqref{newcond0}. Therefore, if $\rat\sigma \not \equiv \infty$, then \eqref{newcond0} holds if and only if \eqref{Kcond} holds. 

We next prove the following proposition.
\begin{prop} \label{radlemma} Let $0<q<1$, $n \ge 1$, and $0<2 \alpha<n$. 
Let $  \sigma \in M^+(\R^n)$ be radially symmetric. Suppose that $\limsup_{|x| \to 0} \rat\sigma(x) = +\infty$. Then there exists a constant $c>0$ such that \eqref{Kcond} holds if and only if \, $\int_{|y| \ge 1}\frac{d\sigma(y)}{|y|^{n-2\al} } < \infty$ \, and 
\begin{equation} \label{radcond}
\limsup_{|x| \to 0} \frac{\frac{1}{|x|^{(n-2\al)(1-q)}}\int_{B(0,|x|)} \frac{d\sigma(y)}{|y|^{q(n-2\al)}} }{ \int_{|y| \ge |x|} \frac{d\sigma(y)}{|y|^{n-2\al} }} < \infty. 
\end{equation}
\end{prop}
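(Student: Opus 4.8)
The plan is to recast condition \eqref{Kcond} using the two-sided description of $\rat\sigma$ for radial $\sigma$, namely $\rat\sigma(x) \ap \frac{\sigma(B(0,|x|))}{|x|^{n-2\al}} + \int_{|y|\ge|x|}\frac{d\sigma(y)}{|y|^{n-2\al}}$, together with the explicit form of $\mathbf{K}\sigma$. Since we are assuming $\limsup_{|x|\to 0}\rat\sigma(x)=+\infty$ and we have already shown $\rat\sigma$ is bounded away from the origin and tends to $0$ at infinity, the inequality \eqref{Kcond} is automatic for $|x|$ bounded below; hence it is a condition only as $|x|\to 0$. For small $|x|$, $(\rat\sigma)^{\frac{1}{1-q}}$ dominates $\rat\sigma$, so \eqref{Kcond} is equivalent to $\mathbf{K}\sigma(x) \le c\,(\rat\sigma(x))^{\frac{1}{1-q}}$ for $|x|$ small. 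Taking $(1-q)$-th powers, this reads
$$
\frac{1}{|x|^{(n-2\al)(1-q)}}\int_{B(0,|x|)}\frac{d\sigma(y)}{|y|^{(n-2\al)q}}
\le c\left(\frac{\sigma(B(0,|x|))}{|x|^{n-2\al}} + \int_{|y|\ge|x|}\frac{d\sigma(y)}{|y|^{n-2\al}}\right).
$$
First I would record that the finiteness $\int_{|y|\ge 1}\frac{d\sigma(y)}{|y|^{n-2\al}}<\infty$ is exactly equivalent to $\rat\sigma\not\equiv\infty$ under the radial hypothesis (this is the tail part of \eqref{Rieszfinite}), so it must be assumed on both sides; the remaining content is the equivalence of the displayed inequality with \eqref{radcond}.

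The direction "\eqref{radcond} $\Rightarrow$ \eqref{Kcond}" is the routine one: one simply drops the nonnegative term $\frac{\sigma(B(0,|x|))}{|x|^{n-2\al}}$ on the right-hand side of the displayed inequality and observes that what remains is precisely the boundedness of the ratio in \eqref{radcond}. The reverse direction "\eqref{Kcond} $\Rightarrow$ \eqref{radcond}" is where the work lies: here one must absorb the extra term $\frac{\sigma(B(0,|x|))}{|x|^{n-2\al}}$ into the numerator of \eqref{radcond}. The key estimate is the elementary inequality
$$
\frac{\sigma(B(0,|x|))}{|x|^{n-2\al}}
= \frac{1}{|x|^{n-2\al}}\int_{B(0,|x|)}\frac{|y|^{(n-2\al)q}}{|y|^{(n-2\al)q}}\,d\sigma(y)
\le \frac{1}{|x|^{(n-2\al)(1-q)}}\int_{B(0,|x|)}\frac{d\sigma(y)}{|y|^{(n-2\al)q}},
$$
which holds because $|y|<|x|$ forces $|y|^{(n-2\al)q}\le |x|^{(n-2\al)q}$, so dividing by $|x|^{n-2\al}$ and pulling the factor $|x|^{(n-2\al)q}$ out gives $|x|^{-(n-2\al)(1-q)}$. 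Thus $\frac{\sigma(B(0,|x|))}{|x|^{n-2\al}}\le \mathbf{K}\sigma(x)^{1-q}$, i.e. it is already dominated by the numerator of \eqref{radcond} (up to the $(1-q)$ power). Feeding this back, \eqref{Kcond} for small $|x|$ becomes
$$
\mathbf{K}\sigma(x)^{1-q}
\le c\left(\mathbf{K}\sigma(x)^{1-q} \cdot \tfrac{|x|^{(n-2\al)(1-q)}\,\sigma(B(0,|x|))}{|x|^{n-2\al}\,\int_{B(0,|x|)}\cdots} + \int_{|y|\ge|x|}\frac{d\sigma(y)}{|y|^{n-2\al}}\right),
$$
and one has to be slightly careful: one cannot just cancel $\mathbf{K}\sigma(x)^{1-q}$ on both sides. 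Instead I would argue by contradiction along a sequence $|x_j|\to 0$ on which the ratio in \eqref{radcond} tends to $\infty$; on such a sequence the term $\int_{|y|\ge|x_j|}\frac{d\sigma(y)}{|y|^{n-2\al}}$ is $o(\mathbf{K}\sigma(x_j)^{1-q})$, and the term $\frac{\sigma(B(0,|x_j|))}{|x_j|^{n-2\al}}$ is likewise $o(\mathbf{K}\sigma(x_j)^{1-q})$ by a dominated-convergence / splitting argument (split $B(0,|x_j|)$ into $B(0,\epsilon|x_j|)$ and the annulus $\epsilon|x_j|\le|y|<|x_j|$, using that $\rat\sigma\not\equiv\infty$ to control the inner part), giving $\mathbf{K}\sigma(x_j)^{1-q}\le c\,o(\mathbf{K}\sigma(x_j)^{1-q})$, a contradiction once $j$ is large.

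The main obstacle I expect is precisely this absorption step in the reverse direction — verifying that $\frac{\sigma(B(0,|x|))}{|x|^{n-2\al}}$ is genuinely negligible compared with $\mathbf{K}\sigma(x)^{1-q}$ along a sequence where \eqref{radcond} fails, since a priori $\sigma$ could concentrate near $|y|=|x|$ and make the two quantities comparable rather than one dominating. The way around this is the observation that if $\sigma$ concentrates in the annulus $\frac12|x|\le |y|<|x|$, then that mass is also counted (up to a constant) in $\int_{|y|\ge \frac12|x|}\frac{d\sigma(y)}{|y|^{n-2\al}}$, which by the boundedness of $\rat\sigma$ away from $0$ is itself bounded; so either way the right-hand side of the displayed inequality stays bounded while, along the bad sequence, the left-hand side $\mathbf{K}\sigma(x_j)^{1-q}$ blows up. Once this dichotomy is made precise the contradiction follows and the proposition is proved; I would close by noting that \eqref{radcond} is literally \eqref{char}, so this also establishes the characterization \eqref{char} promised in the introduction.
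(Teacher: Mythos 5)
Your reduction of \eqref{Kcond} to the inequality $\mathbf{K}\sigma(x)^{1-q}\le c\bigl(\sigma(B(0,|x|))/|x|^{n-2\al}+\int_{|y|\ge|x|}|y|^{-(n-2\al)}d\sigma\bigr)$ for small $|x|$ is the same starting point as the paper's \eqref{Kineq}, and your splitting of $B(0,|x|)$ at radius $\epsilon|x|$ is indeed the paper's key device; but the way you close the converse direction has a genuine gap. After the split, the inner ball contributes at most $\epsilon^{(n-2\al)q}\,\mathbf{K}\sigma(x)^{1-q}$ (fine), while the annulus $\{\epsilon|x|\le|y|<|x|\}$ contributes at most $\int_{|y|\ge\epsilon|x|}|y|^{-(n-2\al)}d\sigma$, i.e.\ the tail at the \emph{smaller} radius $\epsilon|x|$, not at $|x|$. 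Your contradiction hypothesis only controls the ratio at radius $|x_j|$, and your rescue of the annulus term --- that $\int_{|y|\ge\frac12|x_j|}|y|^{-(n-2\al)}d\sigma$ ``is itself bounded'' because $\rat\sigma$ is bounded away from the origin --- is false: this tail can tend to $+\infty$ as $|x_j|\to0$ (it does in the paper's closing counterexample \eqref{countereg}), so the asserted dichotomy ``right-hand side stays bounded while the left-hand side blows up'' collapses; failure of \eqref{radcond} concerns the ratio, not absolute size, and both sides may blow up together. The paper closes exactly this hole by absorption plus a change of radius: choose $c_1$ with $c\,c_1^{(n-2\al)q}\le\frac12$, absorb the inner-ball term into the left side of \eqref{Kineq}, leave the annulus inside the tail at radius $c_1|x|$, then shrink the numerator's domain to $B(0,c_1|x|)$ and relabel $\tilde x=c_1x$, so that numerator and tail are finally compared at the \emph{same} radius. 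In your contradiction scheme this would amount to applying \eqref{Kineq} at the larger radius $|x_j|/\epsilon$ and comparing back to $|x_j|$; without some such change of radius the argument does not close.

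There is also a gap in the forward direction. The claim that \eqref{Kcond} is ``automatic for $|x|$ bounded below'' is unjustified, and the heuristic offered points the wrong way: since $\rat\sigma(x)\to0$ as $|x|\to\infty$, the right-hand side of \eqref{Kcond} decays, while $\mathbf{K}\sigma(x)^{1-q}$ contains the growing integral $\int_{|y|<|x|}|y|^{-(n-2\al)q}\,d\sigma$, so one must prove that $\mathbf{K}\sigma$ decays at a rate comparable to $\rat\sigma$. The paper does this for $|x|\ge1$ via H\"older's inequality, $\bigl(\int_{1\le|y|<|x|}|y|^{-(n-2\al)q}d\sigma\bigr)^{\frac1{1-q}}\le\sigma(B(0,|x|))\bigl(\int_{|y|\ge1}|y|^{-(n-2\al)}d\sigma\bigr)^{\frac q{1-q}}$, which is precisely where the tail-finiteness hypothesis enters; and on the intermediate range $\delta\le|x|\le1$ one needs a \emph{lower} bound for $\rat\sigma$ (e.g.\ $\rat\sigma\gtrsim\sigma(B(0,\delta))>0$, positivity following from $\limsup_{|x|\to0}\rat\sigma=+\infty$), whereas ``$\rat\sigma$ bounded away from the origin'' is an upper bound and does not help. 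These two steps must be supplied for your forward implication to be complete.
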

\begin{proof}
Suppose that $\int_{|y| \ge 1}\frac{d\sigma(y)}{|y|^{n-2\al} } < \infty$  and that \eqref{radcond} holds. Then there exists $ \delta, \, 0<\delta <1$, such that 
$$\mathbf{K}\sigma(x) \le c \, (\rat\sigma(x))^{\frac{1}{1-q}}, \quad \text{for all}  \,\,  0 < |x| < \delta.$$
For $ \delta \le |x| \le 1$, we have 
$$\frac{1}{|x|^{n-2\al}}\left(\int_{B(0,|x|)} \frac{d\sigma(y)}{|y|^{(n-2\al)q}}\right)^{\frac{1}{1-q}} \le \frac{1}{\delta^{n-2\al}}\left(\int_{B(0,1)} \frac{d\sigma(y)}{|y|^{(n-2\al)q}}\right)^{\frac{1}{1-q}}.$$
On the other hand,
\begin{equation*}
\begin{aligned} 
 \rat \sigma(x) & \ap \frac{1}{|x|^{n-2\al}}\int_{B(0,|x|)} d\sigma(y) + \int_{|y| \ge |x|}\frac{d\sigma(y)}{|y|^{n-2\al}} \\ & \ge \int_{B(0,\delta)} d\sigma(y) + \int_{|y| \ge 1}\frac{d\sigma(y)}{|y|^{n-2\al}}.
 \end{aligned}
\end{equation*} 
Therefore, there exists a constant $c=c(\delta,\sigma)$ such that 
$$\mathbf{K}\sigma(x) \le c \, \rat\sigma(x), \quad \text{when }  \delta \le |x| \le 1.$$
For $|x| \ge 1$, we have 
$$\int_{B(0,|x|)} \frac{d\sigma(y)}{|y|^{(n-2\al)q}} \le  \int_{0 \le |y| \le 1} \frac{d\sigma(y)}{|y|^{(n-2\al)q}} +  \int_{1\le |y| < |x|} \frac{d\sigma(y)}{|y|^{(n-2\al)q}} .$$
By H\"{o}lder's inequality,
$$\int_{1\le |y| < |x|} \frac{d\sigma(y)}{|y|^{(n-2\al)q}} \le \left(\int_{1\le |y| < |x|} d\sigma(y)\right)^{1-q}\left(\int_{1\le |y| < |x|} \frac{d\sigma(y)}{|y|^{n-2\al}}\right)^q.$$
Hence,
\begin{equation*}
\begin{aligned} 
 \left(\int_{1\le |y| < |x|} \frac{d\sigma(y)}{|y|^{(n-2\al)q}}\right)^{\frac{1}{1-q}} & \le \left(\int_{1\le |y| < |x|} d\sigma(y)\right)\left(\int_{1\le |y| < |x|} \frac{d\sigma(y)}{|y|^{n-2\al}}\right)^{\frac{q}{1-q}} \\
&\le \left(\int_{ |y| < |x|} d\sigma(y)\right)\left(\int_{ |y| \ge 1 } \frac{d\sigma(y)}{|y|^{n-2\al}}\right)^{\frac{q}{1-q}}.
\end{aligned}
\end{equation*}

Therefore, there exists a constant $c=c(\sigma, q)$ such that 
$$\mathbf{K}\sigma(x) \le c \, \rat\sigma(x) , \quad \text{ for } |x| \ge 1.$$
From this it follows 
 $$\mathbf{K}\sigma(x) \le c \, (\rat\sigma(x) + (\rat\sigma(x))^{\frac{1}{1-q}}) < \infty , \quad  x \neq 0.$$

Conversely, suppose that \eqref{Kcond} holds and $\limsup_{|x| \to 0}\rat\sigma(x) =+ \infty$. Then for $|x|$ small enough we have 
$$\rat\sigma(x) \le  (\rat\sigma(x))^{\frac{1}{1-q}}.$$
Consequently, $$\mathbf{K}\sigma(x) \le c \, (\rat\sigma(x))^{\frac{1}{1-q}} ,$$
when $|x|$ is close to $0$, which yields 
\begin{equation}\label{Kineq}
\begin{aligned} 
& \frac{1}{|x|^{(n-2\al)(1-q)}}\int_{|y|<|x|}\frac{d\sigma(y)}{|y|^{(n-2\al)q}} \\ & \le c\left(\frac{1}{|x|^{n-2\al}}\int_{B(0,|x|)}d\sigma(y)  
+ \int_{|y|\ge |x|} \frac{d\sigma(y)}{|y|^{n-2\al} } \right).
\end{aligned}
\end{equation}

We estimate
\begin{equation*}
\begin{aligned} 
\frac{1}{|x|^{n-2\al}}\int_{B(0,|x|)}d\sigma(y)& =
\frac{1}{|x|^{(n-2\al)(1-q)}|x|^{(n-2\al)q}}\int_{0\le|y| \le \delta}\frac{|y|^{(n-2\al)q}\,d\sigma(y)}{|y|^{(n-2\al)q}}\\
 & + \frac{1}{|x|^{n-2\al}}\int_{ \delta < |y| < |x|}\frac{|y|^{n-2\al}\,d\sigma(y)}{|y|^{n-2\al}} \\ &\le \frac{\delta^{(n-2\al)q}}{|x|^{(n-2\al)(1-q)}|x|^{(n-2\al)q}}\int_{0\le|y| \le \delta}\frac{d\sigma(y)}{|y|^{(n-2\al)q}} \\
&+ \int_{ \delta < |y| < |x|}\frac{d\sigma(y)}{|y|^{n-2\al}}.
\end{aligned}
\end{equation*}

Letting $\delta=c_1\,|x|$ where $c_1$ is small enough, we obtain
\begin{equation*}
\begin{aligned} 
\frac{1}{|x|^{n-2\al}}\int_{B(0,|x|)}d\sigma(y)& \le c_1^{(n-2\al)q}\,\frac{1}{|x|^{(n-2\al)(1-q)}}\int_{0\le|y| < |x|}\frac{d\sigma(y)}{|y|^{(n-2\al)q}} \\
& + \int_{ |y| > c_1\,|x|}\frac{d\sigma(y)}{|y|^{n-2\al}}.
\end{aligned}
\end{equation*}
Hence, by \eqref{Kineq}, we get
\begin{equation*}
\begin{aligned} 
  &\frac{1}{|x|^{(n-2\al)(1-q)}}\int_{B(0,|x|)} \frac{d\sigma(y)}{|y|^{q(n-2\al)}} \\&  \le  c\,c_1^{(n-2\al)q}\frac{1}{|x|^{(n-2\al)(1-q)}}\int_{B(0,|x|)} \frac{d\sigma(y)}{|y|^{q(n-2\al)}}\\
&+c \, \int_{|y| \ge c_1\,|x|} \frac{d\sigma(y)}{|y|^{n-2\al} }+ c \, \int_{|y| \ge |x|} \frac{d\sigma(y)}{|y|^{n-2\al} }. 
\end{aligned}
\end{equation*}

If $ c\,c_1^{(n-2\al)q} \le \frac{1}{2}$ and $c_1<1$, then
$$  \frac{1}{|x|^{(n-2\al)(1-q)}}\int_{B(0,|x|)} \frac{d\sigma(y)}{|y|^{q(n-2\al)}}  \le  4c \,\int_{|y| \ge c_1\,|x|} \frac{d\sigma(y)}{|y|^{n-2\al} }. $$
This implies that
$$  \frac{1}{|x|^{(n-2\al)(1-q)}}\int_{B(0,c_1|x|)} \frac{d\sigma(y)}{|y|^{q(n-2\al)}}  \le  4c \,\int_{|y| \ge c_1\,|x|} \frac{d\sigma(y)}{|y|^{n-2\al} }. $$
Letting $\tilde{x}=c_1\,x$, we obtain
$$\frac{1}{|\tilde{x}|^{(n-2\al)(1-q)}}\int_{B(0,\,|\tilde{x}|)} \frac{d\sigma(y)}{|y|^{q(n-2\al)}}  \le  \frac{4c}{c_1^{(n-2\al)(1-q)}}\,\int_{|y| \ge |\tilde{x}|} \frac{d\sigma(y)}{|y|^{n-2\al} }. $$
Therefore, \eqref{radcond} holds, which completes the proof of  Proposition \ref{radlemma}.
\end{proof}

In conclusion, we give a counter example showing that there exist  
radially symmetric $\sigma$ for which \eqref{radialcond} holds, and  so \eqref{subeq} has a nontrivial solution, but  condition  \eqref{newcond0},  and  consequently Brezis--Kamin type estimates   \eqref{twosdhomo} fail. Let 
\begin{equation}\label{countereg}
\sigma(y)= \left\{ \begin{array}{ll}
\frac{1}{|y|^s \log^{\beta}\frac{1}{|y|}} \, , \quad \text{ if } |y| < 1/2,\\
0 , \quad \text{ if } |y| \ge 1/2,
\end{array}\right.
\end{equation} 
where $s=(1-q)n+2 \alpha q$ and $\beta > 1$. Then $2\al<s<n$, 
since  $0<q<1$ and  $0<2\al<n$.

Clearly,   $\int_{|y|\ge 1}\frac{d\sigma(y)}{|y|^{n-2\al}}=0,$
and  
 $$\int_{|y|<1} \frac{d\sigma(y)}{|y|^{(n-2\al)q}}=\int_{|y|< \frac{1}{2}} \frac{dy}{|y|^{n} \log^{\beta}\frac{1}{|y|}} < \infty.$$ 
Hence, \eqref{radialcond} holds, and so \eqref{subeq} has a solution 
by Theorem~\ref{radpro}.

On the other hand, for $|x|$ small, 
$$\int_{B(0,|x|)} \frac{d\sigma(y)}{|y|^{(n-2\al)q}}=\int_{|y|< |x|} \frac{dy}{|y|^{n} \log^{\beta} \frac{1}{|y|}} \approx \log^{1-\beta}\frac{1}{|x|},$$ 
and
$$ \int_{|y|\ge|x|} \frac{d\sigma(y)}{|y|^{n-2\al}} \approx \frac{1}{|x|^{(n-2\al)(1-q)} \log^{\beta}\frac{1}{|x|}}.$$
Therefore,
$$\lim_{|x| \to 0} \frac{\frac{1}{|x|^{(n-2\al)(1-q)}}\int_{B(0,|x|)} \frac{d\sigma(y)}{|y|^{q(n-2\al)}}}{\int_{|y|\ge|x|} \frac{d\sigma(y)}{|y|^{n-2\al}}}
= \lim_{|x| \to 0}\log\frac{1}{|x|} = +\infty.$$
 
 It follows from Proposition~\ref{radlemma} that \eqref{Kcond} fails.  Then, 
 as shown above, \eqref{newcond0} fails as well. Hence, by Theorem~\ref{mainthm} with $p=2$, the upper estimate in \eqref{twosdhomo} is no longer true 
for any nontrivial solution $u$ 
of \eqref{subeq} in this case. We recall that the lower 
estimate in \eqref{twosdhomo} is always true for any nontrivial supersolution  $u$ (see \cite{CV2}).


\begin{thebibliography}{XXXXX}

\bibitem[AH96]{AH}   D.~R. Adams and L.~I. Hedberg, \emph{Function Spaces and Potential Theory},
  Grundlehren der math. Wissenschaften {\bf 314}, Berlin--Heidelberg--New York, Springer, 1996.
  
  \bibitem[BV03]{BV} M.-F. Bidaut-V\'{e}ron, \emph{Removable singularities and existence for a quasilinear equation
with absorption or source term and measure data,} Adv. Nonlin. Studies {\bf 3} (2003), 25--63.



\bibitem[BK92]{BK} H. Brezis and S. Kamin,  \emph{ Sublinear elliptic equation on $\mathbb{R}^n$,} Manuscr. Math. {\bf 74} (1992), 87--106. 


\bibitem[CO15]{CO} C.~Cascante and J.~ M. Ortega, 
\emph{Two weight  norm 
inequalities for vector-valued operators,} arXiv:1506.06901v2. 



\bibitem[COV04]{COV} C.~Cascante, J.~ M. Ortega, and I. E.  Verbitsky, \emph{Nonlinear potentials and two weight trace inequalities for general dyadic and radial kernels,} Indiana Univ. Math. J. {\bf 53} (2004), 845--882. 


\bibitem[CV14a]{CV1} Cao T. Dat and I. E. Verbitsky, \emph{Finite energy solutions 
of quasilinear elliptic equations with sub-natural growth terms,}  Calc. Var. PDE {\bf 52} (2015), 529--546.

\bibitem[CV14b]{CV2} Dat T. Cao and I. E. Verbitsky, \emph{Nonlinear elliptic equations and intrinsic potentials of Wolff type,} arXiv:1409.4076. 

\bibitem[HKM06]{HKM}
J.~Heinonen, T.~Kilpel\"{a}inen and O.~Martio, \emph{Nonlinear Potential
  Theory of Degenerate Elliptic Equations}, Dover Publ., 2006 (unabridged republ. of 1993 ed., Oxford Universiy Press).

\bibitem[HW83]{HW}
L.~I. Hedberg and T.~Wolff, \emph{Thin sets in nonlinear potential theory},
  Ann. Inst. Fourier (Grenoble) \textbf{33} (1983), 161--187.
	
\bibitem[JV10]{JV1} B. J.~Jaye and I. E.~Verbitsky, 
\emph{The fundamental solution of nonlinear operators with natural growth terms,} Ann. Sc. Norm. Super. Pisa Cl. Sci. (5)  {\bf 12} (2013), 93--139.

\bibitem[JV12]{JV2} B. J.~Jaye and I. E. ~Verbitsky, 
\emph{Local and global behaviour of solutions
to nonlinear equations with natural growth terms,} Arch. Rational Mech. Anal. {\bf 204} (2012), 627--681.

  
  \bibitem[KKT09]{KKT} T.~Kilpel\"{a}inen, T.~Kuusi, and A.~Tuhola-Kujanp\"{a}\"{a}, \emph{Superharmonic functions are locally renormalized solutions,} Ann. Inst. H. Poincar\'{e}, Anal. Non Lin\'{e}aire {\bf 28} (2011),  775--795.  

	
\bibitem[KM94]{KM}
T.~Kilpel\"{a}inen and J.~Mal\'y, \emph{The {W}iener test and potential estimates for
  quasilinear elliptic equations}, Acta Math. {\bf 172}  (1994), 137--161.
  
  
\bibitem[NTV03]{NTV03}
F.~Nazarov,  S.~Treil and A.~Volberg, \emph{The Tb-theorem on non-homogeneous spaces,} Acta Math. \textbf{190} (2003),  151--239.
  


\bibitem[KuMi14]{KuMi} T.~Kuusi and G. Mingione, \emph{Guide to nonlinear potential estimates,} Bull. Math. Sci. \textbf{4} (2014), 1--82. 


\bibitem[Lab02]{Lab} D.~Labutin, \emph{Potential estimates for a class of fully nonlinear
elliptic equations}, Duke Math. J. \textbf{111} (2002), 1--49. 


  
\bibitem[Maz11]{Maz11} V. G. Maz'ya, \emph{Sobolev Spaces, with Applications to Elliptic Partial Differential Equations,} 2nd augmented ed., Grundlehren der math. Wissenschaften \textbf{342}, Springer, Berlin, 2011.




\bibitem[PV08]{PV1} Nguyen C. Phuc and I. E. Verbitsky, \emph{Quasilinear and Hessian equations of Lane-Emden type,} Ann. Math. {\bf 168} (2008), 859--914. 


\bibitem[PT14]{PT} F. Punzo and G. Terrone, \emph{On a fractional sublinear elliptic equation with a variable coefficient,}
Applic. Analysis  {\bf 94} (2015), 800--818.


 
\bibitem[Rub96]{Rub} B. Rubin, \emph{Fractional Integrals and Potentials}, Monographs and Surveys in Pure and Applied Math. {\bf 82}, Chapman and Hall/CRC, 1996.

\bibitem[TW02]{TW1} N. S. Trudinger and X. J. Wang, \emph{On the weak continuity of elliptic operations and applications to potential theory,} Amer. J.  Math. {\bf 124} (2002), 369--410.


\bibitem[Ver99]{V1} I. E. Verbitsky, \emph{Nonlinear potentials and trace inequalities,}   
Oper. Theory Adv. Appl. {\bf 110} (1999), 323--343.


\end{thebibliography}
\end{document}